\newtheorem{theorem}{Theorem}[section]
\newtheorem{proposition}[theorem]{Proposition}
\newtheorem{lemma}[theorem]{Lemma}
\newtheorem{question}[theorem]{Question}
\newtheorem{conjecture}[theorem]{Conjecture}
\theoremstyle{plain}
\theoremstyle{remark}
\newtheorem{remark}[theorem]{Remark}
\newcommand{\F}{{\mathbb F}}
\newcommand{\Q}{{\mathbb Q}}
\newcommand{\Z}{{\mathbb Z}}
\newcommand{\cO}{\mathcal{O}}
\newcommand{\cU}{\mathcal{U}}
\newcommand{\cT}{\mathcal{T}}
\newcommand{\cS}{\mathcal{S}}
\author{Zafer Selcuk Aygin}
\address{Zafer Selcuk Aygin \\
Department of Mathematics and Statistics\\
University of Calgary\\
AB T2N 1N4, Canada}
\email{selcukaygin@gmail.com}
\author{Khoa D.~Nguyen}
\address{
Khoa D.~Nguyen \\
Department of Mathematics and Statistics\\
University of Calgary\\
AB T2N 1N4, Canada
}
\email{dangkhoa.nguyen@ucalgary.ca}
\keywords{Monogenicity, pure cubic fields, function fields, ABC}
\subjclass[2010]{Primary: 11R16, 11R58. Secondary: 11D25}
\begin{document}
	\title{Monogenic pure cubics}
	
	\date{September 2020}
	
	\begin{abstract}
	Let $k\geq 2$ be a square-free integer. We prove that the number 
	of 
	square-free integers
	$m\in [1,N]$ such that $(k,m)=1$ and $\Q(\sqrt[3]{k^2m})$ is 
	monogenic is
	$\gg N^{1/3}$ and $\ll N/(\log N)^{1/3-\epsilon}$ for any $\epsilon>0$. Assuming ABC, the upper bound can be improved to $O(N^{(1/3)+\epsilon})$. Let $F$ be the finite field of order $q$ with $(q,3)=1$ and let $g(t)\in F[t]$ be non-constant square-free. We  prove unconditionally the analogous result that the number of  square-free $h(t)\in F[t]$ such that $\deg(h)\leq N$, $(g,h)=1$ and $F(t,\sqrt[3]{g^2h})$ is monogenic is $\gg q^{N/3}$ and $\ll N^2q^{N/3}$.
	\end{abstract}
	
	\maketitle
	
	\section{introduction}
	A number field $K$ is called monogenic if its ring of integers 
	$\cO_K$ is $\Z[\theta]$ for some $\theta\in\cO_K$. 
	Number fields that are fundamental to the development of 
	algebraic number theory  such as quadratic and cyclotomic 
	fields are all monogenic. Certain questions about monogenic
	number fields (as well as monogenic orders)
	are closely related to the so called discriminant form 
	equations which have been studied extensively by 
	Evertse, Gy\H{o}ry, and other authors. The readers are 
	referred to \cite{EG15_UE,EG16_DE,Ngu17_OM,BN18_SF,Gaa19_DE}
	and the references there for many interesting results including 
	those over positive characteristic fields.

	A pure cubic field
	is a number field of the form $\Q(\sqrt[3]{n})$ where $n>1$ is 
	cube-free. In a certain sense, 
	pure cubic fields are the ``next'' family of number 
	fields 
	to investigate
	after quadratic fields especially from the computational  
	point of view (for example, see \cite{WCS80_CO,WDS83_AR,SS99_VA} 
	of which the third paper treats the function field analogue of 
	pure cubic fields).
	While every
	quadratic field is monogenic, many pure cubics are not and the
	goal of this paper is to study the density of
	monogenic pure cubic fields and its function field analogue.
	For instance, the
	first naive question is whether the set 
	$$\{\text{cube-free $n>1$ such that $\Q(\sqrt[3]{n})$ is monogenic}\}$$
	has zero density. It turns out that the answer is negative thanks 
	to the below theorem of Dedekind. Satisfactory results have been 
	obtained by Bhargava, Shankar, and Wang \cite{BSW_SV} in which they 
	establish the density of monic integer polynomials of degree $n$ 
	having squarefree discriminants and, in a certain sense, the 
	density of monogenic number fields of degree $n$ for any $n>1$. The 
	questions considered in this paper are quite different in nature 
	since we restrict to the 1-parameter family $\Q(\sqrt[3]{n})$ as 
	well as
	the family of polynomials $X^3-n$ none of which have 
	square-free discriminant.

	To see why the above question has a negative answer, we start with 
	the following \cite[p.~35--36]{Mar18_NF}:
		
	\begin{theorem}[Dedekind]\label{thm:DedekindZ}
	Let $n>1$ be a cube-free integer, let $\alpha=\sqrt[3]{n}$, and write $n=k^2m$ where $k$ and $m$ are
	square-free positive integers. We have the following:
	\begin{itemize}
	    \item If $n\not\equiv\pm 1$ mod $9$ then $\{1,\alpha,\alpha^2/k\}$ is
	    an integral basis of $K$.
	    \item If $n\equiv \pm 1$ mod $9$ then $\{1,\alpha,(k^2\pm k^2\alpha+\alpha^2)/(3k)\}$ is an integral basis of $K$.
	\end{itemize}
	\end{theorem}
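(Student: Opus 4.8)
The plan is to exhibit an explicit order containing $\alpha$ and then verify maximality one prime at a time. First I would record that, since $n=k^2m$ is cube-free, $k$ and $m$ are coprime, and set $\theta:=\alpha^2/k$, so that $\theta^3=n^2/k^3=km^2\in\Z$; thus $\theta=\sqrt[3]{km^2}$ is an algebraic integer in $K=\Q(\alpha)$, and the relations $\alpha^2=k\theta$, $\theta^2=m\alpha$, $\alpha\theta=km$ show that $M:=\Z+\Z\alpha+\Z\theta$ is an order. Passing from the basis $(1,\alpha,\alpha^2)$ to $(1,\alpha,\theta)$ scales the discriminant by $1/k^2$, so from $\disc(X^3-n)=-27n^2$ one gets $\disc(M)=-27k^2m^2$. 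Thus $\{1,\alpha,\theta\}$ is exactly the candidate basis of the first bullet, and it suffices to determine for which primes $p$ one has $M=\cO_K$ locally.

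Next I would dispose of all primes $p\neq 3$, together with $p=3$ when $3\mid n$, using Dedekind's criterion. If $p\nmid 3km$ then $p\nmid\disc(X^3-n)$, so $\Z[\alpha]\subseteq M$ is $p$-maximal. If $p\mid m$ then $v_p(n)=1$, the reduction of $X^3-n$ modulo $p$ is $X^3$, and the resulting ``remainder'' $-n/p$ is a unit mod $p$, so Dedekind's criterion shows $\Z[\alpha]$ is $p$-maximal. Symmetrically, if $p\mid k$ then $v_p(km^2)=1$, and applying Dedekind's criterion to the minimal polynomial $X^3-km^2$ of $\theta$ shows $\Z[\theta]\subseteq M$ is $p$-maximal. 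Since each of these three arguments applies verbatim when the relevant prime equals $3$, I conclude that $M$ is $p$-maximal at every prime except possibly $p=3$ in the remaining case $3\nmid n$.

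The crux is therefore $p=3$ with $3\nmid n$. Here $n\equiv c\pmod 3$ with $c\in\{1,-1\}$ and $X^3-n\equiv (X-c)^3\pmod 3$ in $\F_3[X]$. Writing $X^3-n=(X-c)^3+3F(X)$ I would compute $F(X)=cX^2-c^2X+(c^3-n)/3$, so that $F(c)\equiv (c^3-n)/3\pmod 3$. Since $c^3=c$ and $c\equiv n\pmod 3$, this residue vanishes precisely when $9\mid (c-n)$, that is, when $n\equiv\pm 1\pmod 9$. Hence Dedekind's criterion shows $\Z[\alpha]$ is $3$-maximal exactly when $n\not\equiv\pm 1\pmod 9$; as $3\nmid k$ here, $M$ and $\Z[\alpha]$ agree locally at $3$, and combining with the previous paragraph yields $M=\cO_K$, which proves the first bullet.

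Finally, suppose $n\equiv\pm 1\pmod 9$, so that $M$ fails to be $3$-maximal and $\cO_K$ is an overorder of index divisible by $3$. I would put $\gamma:=\dfrac{k^2\pm k^2\alpha+\alpha^2}{3k}=\dfrac{k(1\pm\alpha)+\theta}{3}$ and check, by a direct computation of its characteristic polynomial (whose trace is $k$, the remaining coefficients being integral precisely under the congruence $n\equiv\pm 1\pmod 9$), that $\gamma\in\cO_K$. Since $\theta=3\gamma-k\mp k\alpha$, one has $M\subseteq M':=\Z+\Z\alpha+\Z\gamma$ with $[M':M]=3$, whence $\disc(M')=\disc(M)/9=-3k^2m^2$. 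Because $3\nmid km$, the valuation $v_3(\disc(M'))=1$ is odd, which forces $M'$ to be $3$-maximal; and $M'\supseteq M$ is $p$-maximal for all $p\neq 3$ by the earlier steps. Therefore $M'=\cO_K$ and $\{1,\alpha,\gamma\}$ is an integral basis, proving the second bullet. I expect the main obstacle to be this $p=3$ analysis: extracting from Dedekind's criterion the exact congruence governing maximality, and then confirming the integrality of $\gamma$ in precisely the regime where the enlargement is needed.
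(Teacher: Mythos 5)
Your proof is correct, but note that the paper does not actually prove this statement: it simply cites the classical computation in Marcus, \emph{Number Fields}, pp.~35--36, which proceeds by writing a general element $(a+b\alpha+c\alpha^2)/(3km)$ of $\cO_K$ and using trace and norm computations to pin down which denominators can occur. Your route is genuinely different and, I would say, cleaner: you package $\{1,\alpha,\theta\}$ with $\theta=\alpha^2/k=\sqrt[3]{km^2}$ as an order $M$ of discriminant $-27k^2m^2$, dispose of every prime $p$ with $p\mid km$ or $p\nmid 3km$ by Dedekind's criterion applied to $X^3-n$ or to $X^3-km^2$ (exploiting that $\gcd(k,m)=1$ forces $v_p(n)=1$ or $v_p(km^2)=1$ there), and isolate the only genuine case $p=3$, $3\nmid n$, where the criterion reduces exactly to whether $9\mid(n-c)$ for $c\equiv n\pmod 3$ --- precisely the dichotomy $n\equiv\pm1\pmod 9$. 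The enlargement step is also sound: the index computation $[M':M]=3$ gives $v_3(\disc(M'))=1$, and an odd $3$-adic valuation of a discriminant indeed forces $3$-maximality since $\disc(M')=[\cO_K:M']^2\disc(\cO_K)$. The one place you wave your hands --- that the characteristic polynomial of $\gamma=(k\pm k\alpha+\theta)/3$ has integer coefficients when $n\equiv\pm1\pmod 9$ --- does check out (the trace is $k$, the second symmetric function is $(k^2-(\pm n))/3$, and the norm is $k(k^2\pm k^2n+m^2\mp 3n)/27$, all integral under the stated congruence), so nothing essential is missing. What your approach buys is a proof that localizes all the work at $p=3$ and explains structurally why the mod-$9$ condition appears; what the cited computational proof buys is that it needs no machinery beyond traces and norms.
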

	
	An immediate consequence is  
	that $\Q(\sqrt[3]{n})$ is monogenic 
	when $n>1$ is square-free and $n\not\equiv \pm 1$ mod $9$. 
	In fact this is the \emph{only} case 
	when we have a positive density result. 
	For  the remaining cases (i.e. $k>1$ or $n\equiv \pm 1$ mod $9$), the conclusion is in stark contrast with the above. As a side
	note, a recent paper of Gassert, Smith, and Stange \cite{GSS19_AF}
	considers the 1-parameter family of quartic fields given by
	$X^4-6X^2-\alpha X-3$ and shows that a positive density of them
	are monogenic.
	
	Throughout this paper, for each square-free positive 
	integer $k$, let:
	$$\cS_k=\{\text{square-free $m>0$: $(m,k)=1$, $k^2m\not\equiv \pm 1$ mod $9$,
	$\Q(\sqrt[3]{k^2m})$ is monogenic}\},$$
	and if $(k,3)=1$ let
	$$\cT_k=\{\text{square-free $m>0$: $(m,k)=1$,
	$k^2m\equiv \pm 1$ mod $9$, 
	$\Q(\sqrt[3]{k^2m})$ is monogenic}\}.$$
	From now on, whenever $\cT_k$ is mentioned, we tacitly assume
	the condition that $(k,3)=1$.
	Our  main results for pure cubic number fields
	are the following:
	
	\begin{theorem}\label{thm:unconditional}
    For every $\epsilon>0$ and square-free integer $k\geq 2$, we have
	$N^{1/3}\ll_k \vert\cS_k\cap [1,N]\vert\ll_{k,\epsilon} N/(\log N)^{1/3-\epsilon}$ as $N\to\infty$.
	For every square-free $k\geq 1$, we have
	$N^{1/3}\ll_k\vert\cT_k\cap [1,N]\vert\ll_{k,\epsilon} N/(\log N)^{1/3-\epsilon}$ as $N\to\infty$. 
	Consequently, the sets
	$S_k$ for $k\geq 2$ and the sets $T_k$ for $k\geq 1$
	have zero density.
	\end{theorem}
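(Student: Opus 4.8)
The plan is to translate monogenicity into a Thue-type equation and then count. By Theorem~\ref{thm:DedekindZ}, for $m\in\cS_k$ the set $\{1,\alpha,\alpha^2/k\}$ with $\alpha=\sqrt[3]{k^2m}$ is an integral basis of $K=\Q(\sqrt[3]{k^2m})$; writing $\beta=\alpha^2/k$ one has $\alpha^2=k\beta$, $\alpha\beta=km$, $\beta^2=m\alpha$. For $\theta=x+y\alpha+z\beta$ the index $[\cO_K:\Z[\theta]]$ equals the absolute value of the determinant expressing $1,\theta,\theta^2$ in this basis, and a direct computation gives the binary cubic index form $ky^3-mz^3$; hence $K$ is monogenic iff $ky^3-mz^3=\pm1$ is solvable in integers. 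For $m\in\cT_k$ the integral basis $\{1,\alpha,(k^2+sk^2\alpha+\alpha^2)/(3k)\}$ (with $s=\pm1$ the sign of $k^2m$ modulo $9$) has index $3$ over $\Z[1,\alpha,\beta]$, and after the substitution $u=3y+skz$, $v=z$ one finds that $K$ is monogenic iff $ku^3-mv^3=\pm9$ has a solution with $u\equiv skv\pmod3$. This reduction is the first step.

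For the upper bound I would extract a necessary condition on the prime factors of $m$. A solution of $ky^3-mz^3=\pm1$ forces $ky^3\equiv\pm1\pmod m$, so since $(m,k)=1$ the class $\pm k^{-1}$ is a cube in $\F_p^\times$ for every prime $p\mid m$; as $-1$ is a cube modulo each prime $p\equiv1\pmod3$, this just says $k$ is a cubic residue modulo every such $p$ (primes $p\equiv2\pmod3$ are unrestricted). The identical computation for $\cT_k$ replaces the condition by ``$9k^{-1}$ is a cube modulo $p$''. In either case the forbidden primes form a set of Dirichlet density $1/3$, by Chebotarev in $\Q(\zeta_3,k^{1/3})$ (respectively $\Q(\zeta_3,(9/k)^{1/3})$), using that $k$ and $9/k$ are never rational cubes for square-free $k$ --- this is exactly where $k\geq2$ is needed for $\cS_k$. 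Consequently every admissible $m$ is built solely from primes in a set of density $2/3$, and a standard upper-bound sieve (equivalently the estimate $\prod_{p\leq N,\ p\text{ forbidden}}(1-1/p)\asymp(\log N)^{-1/3}$) yields $|\cS_k\cap[1,N]|,\,|\cT_k\cap[1,N]|\ll_{k,\epsilon}N/(\log N)^{1/3-\epsilon}$; zero density follows at once.

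For the lower bound I would exhibit an explicit one-parameter family. Taking $z=1$, $m=ky^3-1$ solves $ky^3-mz^3=1$ for every $y$; these $m$ are coprime to $k$, pairwise distinct, and lie in $[1,N]$ once $y\ll_k N^{1/3}$, and one checks that some residue class $y\bmod3$ forces $k^2m\not\equiv\pm1\pmod9$. It then remains to see that $\gg_k N^{1/3}$ of the values $ky^3-1$ (along that progression) are square-free: this polynomial has $f(0)=-1$ and hence no fixed square divisor, and the square-free values of a polynomial of degree $\leq3$ have positive density unconditionally, so $\gg_k N^{1/3}$ admissible square-free $m$ survive. The set $\cT_k$ is treated identically with $m=ku^3-9$ and $u\equiv sk\pmod3$, a progression along which $ku^3-9$ is coprime to $3$ and $k^2m\equiv\pm1\pmod9$ holds automatically.

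The main obstacle is the index-form computation for $\cT_k$: because the integral basis carries the denominator $3k$, one must keep track of the factor $9$ and of the side congruence $u\equiv skv\pmod3$ to be sure monogenicity is genuinely equivalent to $ku^3-mv^3=\pm9$ rather than to something coarser. The analytic inputs --- square-free values of cubics for the lower bound, and the Chebotarev-plus-sieve estimate for the upper bound --- are standard. It is worth noting that the gap between the lower bound $N^{1/3}$ and the upper bound $N/(\log N)^{1/3-\epsilon}$ is precisely the gap between ``$m$ has only admissible prime factors'' (necessary) and ``the Thue equation is actually solvable'' (sufficient); narrowing it to $N^{1/3+\epsilon}$ is what forces one to assume ABC.
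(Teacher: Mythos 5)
Your proposal is correct and follows essentially the same route as the paper: the same reduction of monogenicity to the Thue equations $kX^3-mY^3=1$ and $kX^3-mY^3=9$ via the index form on Dedekind's integral bases, the same upper bound via Chebotarev in the Galois closure of $\Q(\sqrt[3]{k})$ (resp.\ $\Q(\sqrt[3]{9/k})$) giving a density-$1/3$ set of forbidden prime divisors of $m$ followed by a sieve, and the same lower bound via the family $m=kX_0^3-1$ (resp.\ $kX_0^3-9$) restricted to a suitable congruence class and Hooley's theorem on square-free values of cubics. (The paper additionally observes that your side congruence $u\equiv skv\pmod 3$ is automatic from $kX^3-mY^3=9$ and $k^2m\equiv\pm1\pmod 9$, so it can be dropped from the equivalence; this does not affect your argument.)
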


	A table of monogenic pure cubic fields with discriminant up to
	$12\cdot 10^6$
	has been computed by Ga\'al-Szab\'o \cite{GS10_AN,Gaa19_DE}
	and it is noted in \cite[p.~111]{Gaa19_DE} that ``the frequency of
	monogenic fields is decreasing''. Our zero density 
	result
	illustrates this observation. Further investigations and computations involving integral bases and monogenicity of higher degree pure number fields
	have been done by Ga\'al-Remete \cite{GR17_IB}.
	 
	Assuming ABC, we can arrive at the much stronger upper bound:
	\begin{theorem}\label{thm:1/3+epsilon}
	Assume that the ABC Conjecture holds. Let $\epsilon>0$ and let $k$ be
	a square-free positive integer.
	We have
	$\vert \cT_k\cap [1,N]\vert=O_{\epsilon,k}(N^{(1/3)+\epsilon})$.
	And if $k\geq 2$, we have
	$\vert \cS_k\cap [1,N]\vert=O_{\epsilon,k}(N^{(1/3)+\epsilon})$.
	\end{theorem}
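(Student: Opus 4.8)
The plan is to reduce monogenicity to the solvability of a binary cubic Thue-type equation and then to combine the ABC Conjecture with an elementary congruence count. Fix $k$ and recall from Theorem~\ref{thm:DedekindZ} the explicit integral bases. For $m\in\cS_k$ (so $n=k^2m\not\equiv\pm1\bmod 9$) the basis is $\{1,\alpha,\alpha^2/k\}$ with $\alpha=\sqrt[3]{k^2m}$; writing a generator as $\theta\equiv X\alpha+Y\alpha^2/k$ modulo $\Z$ and using $d_K=-27k^2m^2$, a direct computation of $\prod_{i<j}(\theta^{(i)}-\theta^{(j)})$ shows that the index form is $kX^3-mY^3$, so $[\cO_K:\Z[\theta]]=|kX^3-mY^3|$. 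Hence $\Q(\sqrt[3]{k^2m})$ is monogenic if and only if $kX^3-mY^3=\pm1$ has an integer solution. For $m\in\cT_k$ (so $n\equiv\epsilon\bmod9$, $\epsilon=\pm1$, and $(k,3)=1$) I would run the same computation with the basis $\{1,\alpha,(k^2+\epsilon k^2\alpha+\alpha^2)/(3k)\}$ and $d_K=-3k^2m^2$; after the substitution $w=3X+\epsilon kY$ this yields index form $(kw^3-mY^3)/9$, so monogenicity forces $kw^3-mY^3=\pm9$ for some integers $w,Y$. Thus it suffices to bound, for a fixed constant $c_0\in\{1,9\}$,
\[
A(N):=\#\{m\le N:\ kX^3-mY^3=\pm c_0\ \text{is solvable in integers}\}.
\]

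The key step is to invoke ABC to force the $m$-variable $Y$ to be small. Fix $m$ with a solution $(X,Y)$; I may assume $X,Y>0$, the remaining sign patterns being symmetric, and note $Y\neq0$. Rewriting the relation as $kX^3=mY^3\pm c_0$, any common prime factor of $kX^3$ and $mY^3$ divides $c_0$; dividing by this bounded factor produces a coprime triple to which ABC applies, giving
\[
\max(kX^3,mY^3)\ll_{c_0,\epsilon}\rad(kmXY)^{1+\epsilon}\ll_{k}(kmXY)^{1+\epsilon}.
\]
Writing $M=\max(kX^3,mY^3)$, so that $X\asymp(M/k)^{1/3}$ and $Y\asymp(M/m)^{1/3}$, the inequality becomes $M\ll(k^{2/3}m^{2/3}M^{2/3})^{1+\epsilon}$, whence $M\ll_{k,\epsilon}m^{2+\epsilon}$ and therefore
\[
Y^3\asymp M/m\ll_{k,\epsilon}m^{1+\epsilon}\le N^{1+\epsilon},\qquad\text{i.e.}\qquad Y\ll_{k,\epsilon}N^{1/3+\epsilon}.
\]
This power-saving bound on $Y$ is the crux; it is exactly here that ABC is indispensable, since without it the sum in the next step diverges.

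It then remains to count. For each fixed $Y$ with $1\le Y\ll_{k,\epsilon}N^{1/3+\epsilon}$, the map $X\mapsto m=(kX^3\mp c_0)/Y^3$ is strictly monotonic, so distinct admissible $m\le N$ correspond to distinct integers $X$; these $X$ satisfy the cubic congruence $kX^3\equiv\pm c_0\pmod{Y^3}$ and, since $1\le m\le N$, lie in an interval of length $\ll(N/k)^{1/3}Y$. The congruence has at most $\ll_\epsilon Y^\epsilon$ solutions modulo $Y^3$ (at most three cube roots at each prime power away from $3$, with a bounded contribution at $3$), so the number of admissible $X$ for this $Y$ is $\ll_\epsilon Y^\epsilon\bigl(1+N^{1/3}/Y^2\bigr)$. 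Summing,
\[
A(N)\ll_\epsilon\sum_{Y\le N^{1/3+\epsilon}}Y^\epsilon+N^{1/3}\sum_{Y\ge1}Y^{\epsilon-2}\ll_{k,\epsilon}N^{1/3+\epsilon},
\]
which yields the claimed bounds for $|\cT_k\cap[1,N]|$ and (for $k\ge2$) $|\cS_k\cap[1,N]|$. I expect the main obstacle to be the ABC step: one must do the coprimality and common-factor bookkeeping so that ABC applies cleanly and extract precisely the exponent $1/3$ for $Y$, whereas the remaining sign cases and the $p=3$ analysis in the congruence count are routine.
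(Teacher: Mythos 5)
Your proposal is correct and follows essentially the same route as the paper: reduce monogenicity to the solvability of $kX^3-mY^3=c_0$ with $c_0\in\{1,9\}$ via the index-form computation from Dedekind's integral bases (the paper's Proposition~\ref{prop:kX^3-mY^3}), use ABC on the near-coprime triple to force $Y\ll N^{1/3+\epsilon}$, and then count $X$ for each fixed $Y$ via the $O_\epsilon(Y^\epsilon)$ bound on solutions of the cube-root congruence modulo $Y^3$ together with the interval length $\ll N^{1/3}Y$. The only cosmetic differences are your use of $\pm c_0$ versus the paper's fixed sign (equivalent under $(X,Y)\mapsto(-X,-Y)$) and the paper's explicit bookkeeping of the finitely many $m$ with $X=0$.
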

	
	\begin{remark} 
	From the lower bound in Theorem~\ref{thm:unconditional}, we have 
	that
	the exponent $1/3$ in Theorem~\ref{thm:1/3+epsilon} is best 
	possible. It is not clear if we can replace 
	$N^{(1/3)+\epsilon}$ by some $N^{1/3}f(N)$ where $f(N)$ is
	dominated by $N^{\epsilon}$ for any $\epsilon$.
	\end{remark} 
	
	\begin{remark}
	In principle, we can break $\cT_k$ into 
	$\cT_k^+=\{m\in \cT_k:\ k^2m\equiv 1\bmod 9\}$
	and $\cT_k^-=\{m\in\cT_k:\ k^2m\equiv -1\bmod 9\}$. When choosing the $\pm$ signs appropriately, all results and arguments for $\cT_k$ remain valid
	for each individual $\cT_k^+$ and $\cT_k^-$. 
	\end{remark}

	We now consider the function field setting. For the rest of this section, let $F$ be a finite field
	of order $q$ and characteristic $p\neq 3$. A polynomial $f(t)\in F[t]$ 
	is called square-free (respectively cube-free) if it is not divisible
	by the square (respectively cube) of a 
	\emph{non-constant} element of $F[t]$. Every cube-free $f(t)$ can be written uniquely as $f(t)=g(t)^2h(t)$ in which $g(t),h(t)\in F[t]$
	are square-free and $g(t)$ is monic. We have the analogue of Dedekind's theorem for $F[t]$:
	\begin{theorem}[function field Dedekind]\label{thm:DedekindFt}
	Let $f(t)\in F[t]$ be cube-free, let $\alpha=\sqrt[3]{f}$, $K=F(t,\alpha)$,
	and let $\cO_K$ be the integral closure of $F[t]$ in $K$. Express
	$f(t)=g(t)^2h(t)$ as above. Then 
	$\{1,\alpha,\alpha^2/g\}$ is a basis of $\cO_K$ over $F[t]$.
	\end{theorem}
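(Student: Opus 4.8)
The plan is to exhibit the explicit module $M=F[t]\cdot 1\oplus F[t]\cdot\alpha\oplus F[t]\cdot\beta$ with $\beta=\alpha^2/g$, check that $M\subseteq\cO_K$, and then force the equality $M=\cO_K$ by comparing discriminants. First I would record that, since $f$ is cube-free and non-constant, it is not a cube in $F(t)$, so $X^3-f$ is irreducible over $F(t)$ (using $\car F\neq 3$) and $[K:F(t)]=3$ with $\{1,\alpha,\alpha^2\}$ an $F(t)$-basis. Writing $f=g^2h$ with $g,h$ square-free, $g$ monic, and $\gcd(g,h)=1$, a direct computation gives $\beta^3=\alpha^6/g^3=f^2/g^3=gh^2\in F[t]$, so $\beta$ is a root of the monic polynomial $X^3-gh^2\in F[t][X]$ and hence integral over $F[t]$. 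Thus $M\subseteq\cO_K$, and $\{1,\alpha,\beta\}$ is visibly an $F(t)$-basis of $K$ (it differs from $\{1,\alpha,\alpha^2\}$ only by the scalar $1/g$ in the last coordinate).

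Next I would reduce everything to a discriminant identity. Since $M\subseteq\cO_K$ are free $F[t]$-modules of rank $3$ over the principal ideal domain $F[t]$, one has $\disc(M)=[\cO_K:M]^2\,\disc(\cO_K)$ as ideals of $F[t]$, where $[\cO_K:M]$ is the module index; hence $M=\cO_K$ as soon as $\disc(M)$ and $\disc(\cO_K)$ generate the same ideal. The discriminant of $\{1,\alpha,\alpha^2\}$ is the discriminant of $X^3-f$, namely $-27f^2$, and passing to $\{1,\alpha,\beta\}$ multiplies it by $(\det\mathrm{diag}(1,1,1/g))^2=g^{-2}$, so
\[
\disc(M)=-27\,f^2/g^2=-27\,g^2h^2.
\]
As $\car F\neq 3$, the constant $-27$ is a unit in $F[t]$, so $\disc(M)$ generates the ideal $(g^2h^2)=(gh)^2$.

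It remains to show $\disc(\cO_K)=(gh)^2$ as ideals, and this ramification computation is the main point. For a finite prime $\fp$ of $F[t]$ with $\fp\nmid f$ we have $\fp\nmid\disc(X^3-f)=-27f^2$, so $X^3-f\bmod\fp$ is separable, $\fp$ is unramified, and it contributes nothing. For $\fp\mid f$ we have $\ord_{\fp}(f)\in\{1,2\}$ (because $f=g^2h$ with $g,h$ square-free and coprime), which is coprime to $3$; working over the completion and writing $f=u\pi^{a}$ with $u$ a unit and $a=\ord_{\fp}(f)$, the element $\alpha$ has valuation $a/3\notin\Z$, forcing $\fp$ to be totally ramified with ramification index $e=3$. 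Since $\car F\neq 3$ does not divide $e$, this ramification is tame, so the different exponent is exactly $e-1=2$ and, the residue degree being $1$, the exponent of $\fp$ in $\disc(\cO_K)$ is $2$. Summing over $\fp\mid f$, i.e.\ over $\fp\mid gh$, gives $\disc(\cO_K)=\prod_{\fp\mid gh}\fp^{2}=(gh)^2$. Comparing with the previous paragraph yields $[\cO_K:M]^2=(1)$, so the index is a unit and $M=\cO_K$, which is the assertion. The hard part is precisely this local analysis at the ramified primes: one must verify that the tame, totally ramified behaviour (and hence the different exponent $2$) holds uniformly for both $\ord_\fp(f)=1$ and $\ord_\fp(f)=2$ and independently of whether $F$ contains the cube roots of unity---this is exactly where $\car F\neq 3$ is essential. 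As an alternative to the global discriminant count, I could instead run Dedekind's criterion at each $\fp\mid g$ to confirm $M_\fp=(\cO_K)_\fp$ directly, but the discriminant comparison is cleaner.
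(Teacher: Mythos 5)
Your argument is correct, but it is not the route the paper takes: the paper's ``proof'' is a one-line reference to Marcus's treatment of the number-field case (\cite[p.~35--36]{Mar18_NF}), which is an elementary index computation --- one checks integrality of $\alpha^2/g$, computes $\disc(1,\alpha,\alpha^2/g)$, and then rules out each prime $\fp\mid gh$ dividing the index $[\cO_K:M]$ by a direct congruence/trace argument on a putative integer $(a+b\alpha+c\alpha^2/g)/\fp$. You instead pin down $\disc(\cO_K)$ a priori by local ramification theory: every $\fp\mid f$ has $\ord_\fp(f)\in\{1,2\}$ prime to $3$, hence is totally and (since $\car F\neq 3$) tamely ramified with different exponent $e-1=2$, giving $\disc(\cO_K)=(gh)^2$, which matches $\disc(M)=-27g^2h^2$ and forces $[\cO_K:M]=(1)$. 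Both proofs are sound; yours is arguably cleaner and makes transparent exactly why the function-field statement has a single case while Theorem~\ref{thm:DedekindZ} splits according to $n\bmod 9$ --- the absence of the wildly ramified prime $3$ --- whereas the Marcus-style adaptation stays entirely elementary and avoids invoking the different/conductor--discriminant machinery. The small points you should make explicit if writing this up are that $f$ must be non-constant (otherwise $X^3-f$ need not be irreducible and the statement degenerates), that $\gcd(g,h)=1$ follows automatically from cube-freeness, and that $K/F(t)$ is separable so the discriminant--different formalism applies; none of these is a gap in substance.
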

	\begin{proof}
	The proof is a straightforward adaptation of steps in the proof
	of Theorem~\ref{thm:DedekindZ} given in \cite[p.~35--36]{Mar18_NF}
	\end{proof}
	
	As before, $K=F(t,\sqrt[3]{f})$ is called monogenic if $\cO_K=F[t,\theta]$ for some 
	$\theta\in \cO_K$. For each monic square-free $g(t)\in F[t]$, let
	$$\cU_g=\{\text{square-free $h\in F[t]:$ $(g,h)=1$, $F(t,\sqrt[3]{g^2h})$ is monogenic}\}.$$
	For each positive integer $N$, let $F[t]_{\leq N}$ denote the set of polynomials of degree at most $N$. It is easy to show that there are
	$q^{N+1}-q^N$ square-free polynomials in $F[t]_{\leq N}$. Therefore, if we define the density of a subset $A$ of $F[t]$ to be 
	$$\lim_{N\to\infty}\frac{\vert A\cap F[t]_{\leq N}\vert}{\vert F[t]_{\leq N}\vert}$$
	(assuming the limit exists), then the set $\cU_1$ has density $1-1/q$. As before, this is in stark contrast to the case $\deg(g)>0$:
	\begin{theorem}\label{thm:Ft setting}
	Let $g$ be a non-constant monic square-free polynomial in $F[t]$. 
	We have 
	$$q^{N/3}\ll \vert \cU_g\cap F[t]_{\leq N}\vert \ll N^2q^{N/3}$$
	as $N\to\infty$ where the implied constants depend only on $F$
	and $g$.
	\end{theorem}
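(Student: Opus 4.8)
The plan is to turn monogenicity into a Thue-type equation over $F[t]$ and then count its solutions. Put $\alpha=\sqrt[3]{g^2h}$ and $\beta=\alpha^2/g=\sqrt[3]{gh^2}$, so that by Theorem~\ref{thm:DedekindFt} the set $\{1,\alpha,\beta\}$ is an $F[t]$-basis of $\cO_K$, with $\alpha^2=g\beta$, $\beta^2=h\alpha$, $\alpha\beta=gh$. For $\theta=x+y\alpha+z\beta$ I would expand $\theta^2$ in this basis and compute the determinant of the coordinate vectors of $1,\theta,\theta^2$; the variable $x$ cancels and one gets the index form $I(y,z)=gy^3-hz^3$. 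Hence $\cO_K=F[t][\theta]$ for some $\theta$ exactly when $I(y,z)$ is a unit, i.e.
\[
F(t,\sqrt[3]{g^2h})\text{ is monogenic}\iff gy^3-hz^3\in F^{\times}\text{ for some }y,z\in F[t].
\]
Any solution is primitive up to constants: if $\delta=\gcd(y,z)$ then $\delta^3\mid c$, so $\delta\in F^{\times}$; thus we may always normalize $\gcd(y,z)=1$ with $z$ monic.

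For the lower bound I would specialize $z=1$ and fix any $c\in F^{\times}$, so $h=gy^3-c$ has $\gcd(h,g)=\gcd(c,g)=1$ and lies in $\cU_g$ as soon as it is square-free (the pair $(y,1)$ already witnesses monogenicity). For $\deg y\le (N-\deg g)/3$ we have $\deg h\le N$, and the map $y\mapsto h$ is at most $3$-to-$1$, so it suffices to produce $\gg_g q^{(N-\deg g)/3}$ square-free values. This I would obtain from the square-free sieve: for irreducible $P\nmid g$ the congruence $gy^3\equiv c\pmod{P^2}$ cuts out at most three residues (a root mod $P$ is simple since the derivative $3gy^2$ is a unit there, and lifts uniquely), while $P\mid g$ forces $P\nmid h$. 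The main term is $\asymp q^{(N-\deg g)/3}\prod_P\bigl(1-\rho(P^2)q^{-2\deg P}\bigr)$, which is positive because every factor exceeds $1-3q^{-2}>0$.

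For the upper bound the crucial observation is that $h$ is determined by $(y,z,c)$ via $h=(gy^3-c)/z^3$, together with two degree bounds on a primitive solution. Comparing leading terms in $gy^3-hz^3=c$ (the case of constant $h$ contributing only $O_g(1)$) forces $\deg g+3\deg y=\deg h+3\deg z$, whence $\deg y=(\deg h-\deg g)/3+\deg z$. Applying the Mason--Stothers theorem (the ABC theorem for $F[t]$) to the coprime triple $gy^3,\,-hz^3,\,-c$, and using that $h$ is square-free so $\rad(h)=h$, gives $\deg g+3\deg y\le \deg g+\deg y+\deg h+\deg z-1$, which combined with the previous relation yields $\deg z\le \deg h/3+O_g(1)\le N/3+O_g(1)$. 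Therefore
\[
\bigl|\cU_g\cap F[t]_{\leq N}\bigr|\le \sum_{c\in F^{\times}}\ \sum_{\substack{z\text{ monic},\,\gcd(z,g)=1\\ \deg z\le N/3+O_g(1)}}\#\{y:\deg y\le (N-\deg g)/3+\deg z,\ gy^3\equiv c\pmod{z^3}\}.
\]
The inner congruence has at most $3^{\omega(z)}$ solutions modulo $z^3$. When $2\deg z\le (N-\deg g)/3$ each of these classes contributes $\asymp q^{(N-\deg g)/3-2\deg z}$ values of $y$, and summing over $z$ via $\sum_{\deg z=e}3^{\omega(z)}\asymp q^{e}e^{2}$ gives $O_g(q^{N/3})$; when $2\deg z>(N-\deg g)/3$ each class contributes at most one $y$, so the total is $\le\sum_{\deg z\le N/3+O_g(1)}3^{\omega(z)}\ll N^2q^{N/3}$, which is the claimed bound.

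The main obstacle, and exactly the step that makes the function field statement unconditional while the number field analogue needs ABC, is the Mason--Stothers bound $\deg z\le N/3+O_g(1)$: without it the divisor-type sum $\sum_z 3^{\omega(z)}$ ranges over arbitrarily large $z$ and diverges, and the factor $N^2$ is precisely what a pole of order three ($\sum_{\deg z=e}3^{\omega(z)}\asymp q^e e^2$) produces. The one genuinely delicate point I would isolate is the characteristic-$p$ degenerate case of Mason--Stothers, in which $gy^3,hz^3,c$ are all $p$-th powers; since $g$ and $h$ are square-free this can occur only in a highly restricted way (forcing $A-B\in F$ when $gy^3=A^p$, $hz^3=B^p$), contributing negligibly. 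A secondary technical point is the large-degree range of the square-free sieve in the lower bound, but this is harmless because $gY^3-c$ has $Y$-degree $3$, so the needed estimate is unconditional.
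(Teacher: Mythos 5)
Your overall route coincides with the paper's: reduce monogenicity to the unit equation $gy^3-hz^3\in F^{\times}$ via the index form, get the lower bound by setting $z=1$ and counting square-free values of $gY^3-c$ (which, as you note, is unconditional over $F[t]$ for a fixed-degree polynomial -- this is exactly Poonen's theorem that the paper cites), and get the upper bound by using Mason--Stothers to force $\deg z\le N/3+O_g(1)$ and then counting $y$ in the at most $3^{\omega(z)}$ residue classes of $gy^3\equiv c\pmod{z^3}$, with the factor $N^2$ coming from the order-three pole of $\sum 3^{\omega(z)}T^{\deg z}$. Your single application of Mason--Stothers combined with the leading-coefficient identity $\deg g+3\deg y=\deg h+3\deg z$ is a mild streamlining of the paper's two applications, and your final summation matches the paper's.

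The one genuine gap is your treatment of the degenerate case of Mason--Stothers, where $(gy^3)'=(hz^3)'=0$. You assert it ``contributes negligibly,'' but that is the wrong logical shape: a priori an $h\in\cU_g$ might admit \emph{only} degenerate witnesses $(y,z)$, and such $h$ would then escape your count entirely rather than form a small exceptional set. What is needed (and what the paper's Lemma~\ref{lem:one derivative is nonzero} supplies) is that every $h\in\cU_g$ admits a non-degenerate witness for the \emph{same} $h$. Writing $gy^3=\tilde{X}^{p^n}$ and $hz^3=\tilde{Y}^{p^n}$ with $n$ maximal gives $\tilde{X}-\tilde{Y}\in F^{\times}$ as you observe, but one must then show $\tilde{X}=gX_1^3$ and $\tilde{Y}=hY_1^3$; the exponent congruences only leave the two alternatives $\tilde{X}=gX_1^3,\ \tilde{Y}=hY_1^3$ or $\tilde{X}=g^2X_1^3,\ \tilde{Y}=h^2Y_1^3$, and the second must be ruled out by a separate application of Mason--Stothers (it would give $\max\{2\deg g+3\deg X_1,\,2\deg h+3\deg Y_1\}\le\deg g+\deg X_1+\deg h+\deg Y_1-1$, which is impossible since the right side is less than the average of the two terms on the left). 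Without this descent your degree bound $\deg z\le N/3+O_g(1)$ is only established for the witnesses to which Mason--Stothers applies, not for every $h$ you must count.
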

	
	\begin{remark} 
	In Theorem~\ref{thm:Ft setting}, an analogous upper bound to
	the number field setting would be $q^{(1/3+\epsilon)N}$. The bound
	$N^2q^{N/3}$ obtained here is much stronger; this is a typical
	phenomenon thanks to the uniformity of various 
	results over function fields.
	\end{remark}
	
	We end this section with a brief discussion on the methods 
	of the proofs. As mentioned above, it is well-known that 
	monogenicity is equivalent to the fact that a certain
	discriminant form equation has a solution
	in $\Z$ (or $\F[t]$ if we are in the function field case). 
	For the questions involving pure cubic fields considered here,
	we end up with an equation of the form $aX^3+bY^3=c$ where 
	$a$ and $c$ are fixed and $b$ varies so that the equation 
	has a solution $(X,Y)$. There are several methods to study
	those Thue equations \cite{EG15_UE,EG16_DE,Gaa19_DE} and we can
	effectively bound the number of solutions or the size of a
	possible solution. However, the question considered here is
	somewhat different: we are estimating how many $b$ for which we 
	have at least one solution. 
	The unconditional upper bound $N/(\log(N))^{1/3}$ in the number 
	field case
	follows from a sieving argument together with a simple instance of 
	the Chebotarev density theorem. The much stronger bound
	$N^{(1/3)+\epsilon}$ in the number field case as well as the
	bound $N^2q^{N/3}$ in the function field case
	follow from the use of ABC together with several combinatorial 
	arguments that might be of independent interest. 
	
	\textbf{Acknowledgments.} We wish to thank Professors Shabnam Akhtari and Istv\'{a}n Ga\'{a}l for helpful comments. Z.~S.~A is partially supported by a PIMS Postdoctoral Fellowship. K.~N. is partially supported by an NSERC Discovery Grant and a CRC tier-2 research stipend.
	
	\section{The number field case} 
	We start with the following:
	\begin{proposition}\label{prop:kX^3-mY^3}
    Let $k$ and $m$ be square-free positive integers. We have:
    \begin{itemize}
        \item [(a)] $m\in \cS_k$ if and only if $k^2m\not\equiv\pm 1$ mod $9$ and the equation $kX^3-mY^3=1$ has a solution $X,Y\in\Z$.
        
        \item [(b)] $m\in\cT_k$ if and only if $k^2m\equiv \pm 1$ mod $9$ and the equation $kX^3-mY^3=9$ has a solution $X,Y\in\Z$.
    \end{itemize}

	\end{proposition}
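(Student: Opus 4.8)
The plan is to exploit the standard fact that $K=\Q(\alpha)$ is monogenic precisely when there is some $\theta\in\cO_K$ with $[\cO_K:\Z[\theta]]=1$, and to pin down this index explicitly from the integral bases supplied by Theorem~\ref{thm:DedekindZ}. Throughout I set $\beta=\alpha^2/k$, so that the $\Q$-basis $\{1,\alpha,\beta\}$ of $K$ has the multiplication table $\alpha^2=k\beta$, $\beta^2=m\alpha$, $\alpha\beta=km$ (with $\alpha^3=k^2m$).

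The computational engine is a single algebraic identity. For an arbitrary $\theta=u_0+u_1\alpha+u_2\beta$ with $u_i\in\Q$, expanding $\theta^2$ via the table above and taking the determinant of the change-of-basis matrix from $\{1,\alpha,\beta\}$ to $\{1,\theta,\theta^2\}$ yields $\det=ku_1^3-mu_2^3$, independent of the constant term $u_0$ (which is why translating $\theta$ by an integer is harmless); hence $\disc(1,\theta,\theta^2)=(ku_1^3-mu_2^3)^2\,\disc(1,\alpha,\beta)$. Feeding this into $\disc(\theta)=[\cO_K:\Z[\theta]]^2\,\disc(\cO_K)$ reduces the whole problem to the binary cubic form $ku_1^3-mu_2^3$.

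For (a) we have $k^2m\not\equiv\pm1\bmod 9$, so Theorem~\ref{thm:DedekindZ} gives $\cO_K=\Z\langle1,\alpha,\beta\rangle$ and I may take $\theta=a+b\alpha+c\beta$ with $b,c\in\Z$. Then $u_1=b$, $u_2=c$ and $\disc(\cO_K)=\disc(1,\alpha,\beta)$, so $[\cO_K:\Z[\theta]]=|kb^3-mc^3|$; monogenicity is therefore equivalent to $kb^3-mc^3=\pm1$ for some $b,c\in\Z$, and replacing $(b,c)$ by $(-b,-c)$ converts $-1$ into $+1$. This is exactly the solvability of $kX^3-mY^3=1$, and since any solution forces $(k,m)=1$, it matches the definition of $\cS_k$.

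For (b) we have $k^2m\equiv\pm1\bmod 9$ and the integral basis $\{1,\alpha,\gamma\}$ with $\gamma=\tfrac13(k\pm k\alpha+\beta)$; the change-of-basis determinant from $\{1,\alpha,\beta\}$ shows $[\cO_K:\Z\langle1,\alpha,\beta\rangle]=3$, so $\disc(1,\alpha,\beta)=9\,\disc(\cO_K)$. Writing $\theta=a+b\alpha+c\gamma$ gives $u_1=(3b\pm ck)/3$ and $u_2=c/3$, whence $[\cO_K:\Z[\theta]]=3\,|ku_1^3-mu_2^3|=\tfrac19\,|k(3b\pm ck)^3-mc^3|$. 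Thus monogenicity means $|kX^3-mY^3|=9$ is solvable with $X=3b\pm ck$ and $Y=c$ for integers $b,c$. The delicate step — and the one I expect to be the main obstacle — is to show that this constrained solvability coincides with unconstrained solvability of $kX^3-mY^3=9$ over $\Z$. Here I would use $(k,3)=1$ and $k^2m\equiv\pm1\bmod 9$: any integer solution of $kX^3-mY^3=\pm9$ satisfies $kX^3-mY^3\equiv0\bmod 3$, so Fermat's little theorem gives $kX\equiv mY\bmod 3$; since $k^2\equiv1$ and $m\equiv k^2m\equiv\pm1\bmod 3$, this forces $X\equiv\pm kY\bmod 3$ with the sign matching that of $\gamma$, whence $b=(X\mp kY)/3\in\Z$ is recovered automatically and no solutions are lost. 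Finally $(X,Y)\mapsto(-X,-Y)$ turns $-9$ into $+9$. This establishes (b).
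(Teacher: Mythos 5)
Your proof is correct and takes essentially the same route as the paper: both reduce monogenicity to the vanishing of the index form $ku_1^3-mu_2^3$ (respectively $\tfrac{1}{9}\bigl(k(3b\pm ck)^3-mc^3\bigr)$) attached to Dedekind's integral bases, and both recover the integrality of $b=(X\mp kY)/3$ from an unconstrained solution via the congruence $X\equiv\pm kY\bmod 3$. Your discriminant--index bookkeeping and mod-$3$ Fermat argument are only cosmetic variants of the paper's direct determinant computation and mod-$9$ cube argument.
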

	\begin{proof}
	For (a), suppose $k^2m\not\equiv \pm 1$ mod $9$,
	let $\alpha=\sqrt[3]{k^2m}$, $K=\Q(\alpha)$,
	and consider the integral basis $\{1,\alpha,\alpha^2/k\}$.
	To find $\theta\in \cO_K$ such that $\cO_K=\Z[\theta]$, 
	it suffices to consider $\theta$ of the form
	$\theta=u\alpha+v(\alpha^2/k)$
	with $u,v\in\Z$. Then we have:
	$$\theta^2=2uvkm+v^2m\alpha+u^2\alpha^2.$$
	We represent $(1,\theta,\theta^2)$
	in terms of the given integral basis and the corresponding matrix has determinant
	$ku^3-mv^2$. Therefore $\cO_K=\Z[\theta]$ if and only if
	the equation $kX^3-mY^3=1$ has a solution $X,Y\in\Z$.
	
	\medskip
	
	The proof of part (b) is similar with some tedious algebraic expressions as follows. Suppose $k^2m\equiv \pm 1$ mod $9$,
	let $\alpha=\sqrt[3]{k^2m}$, $K=\Q(\alpha)$,
	and consider the integral basis $\{1,\alpha,\beta=(k^2\pm k^2\alpha+\alpha^2)/(3k)\}$. As before, consider
	$\theta=u\alpha+v\beta$ with $u,v\in\Z$. Then depending on whether $k^2m\equiv \pm 1$ mod $9$, we have:
	\begin{align*}
	    \theta^2 &=u^2\alpha^2+\frac{2uv}{3k}(k^2\alpha\pm k^2\alpha^2+\alpha^3)+\frac{v^2}{9k^2}(k^2\pm k^2\alpha+\alpha^2)^2\\
	    &=\frac{2uvkm}{3}+\frac{v^2k^2}{9}\pm\frac{2v^2k^2m}{9}+\left(\frac{2uvk}{3}+\frac{v^2m}{9}\pm\frac{2v^2k^2}{9}\right)\alpha\\
	    &\ +\left(u^2\pm\frac{2uvk}{3}+\frac{v^2k^2}{9}+\frac{2v^2}{9}\right)\alpha^2\\
	    &=c_1+c_2\alpha+c_3\beta
	\end{align*}
	where 
	$\displaystyle c_3=3k\left(u^2\pm \frac{2uvk}{3}+\frac{v^2k^2}{9}+\frac{2v^2}{9}\right)$,
	$\displaystyle c_2=\frac{2uvk}{3}+\frac{v^2m}{9}\mp k^2u^2-\frac{2uvk^3}{3}\mp\frac{v^2k^4}{9}$, 
	and the precise value of $c_1$ is not needed for our purpose. We represent $(1,\theta,\theta^2)$ in terms of the given integral basis and the corresponding matrix has determinant:
	$$3ku^3\pm 3k^2u^2v+k^3uv^2-\frac{m\mp k^4}{9}v^3=\frac{1}{9}\left(k(3u\pm kv)^3-mv^3\right).$$
	Therefore if $\cO_K=\Z[\theta]$ then the equation
	$kX^3-mY^3=9$ has a solution $X,Y\in\Z$. Conversely, if
	$(X_0,Y_0)$ is a solution, we can choose $v=Y_0$
	and $u=(X_0\mp kY_0)/3$ and we need to explain why $u\in\Z$. 
	From $k^2m\equiv \pm 1$ mod $9$, we have
	$m\equiv \pm k^4$ mod $9$. Using this and
	the equation $kX_0^3-mY_0^3=9$, we have
	$X_0^3\equiv \pm k^3Y_0^3$ mod $9$. Hence
	$X_0\equiv \pm kY_0$ mod $3$.
	\end{proof}

	The following establish the upper bounds in
	Theorem~\ref{thm:unconditional}:
	\begin{proposition}\label{prop:upper bound}
	    Let $a$ and $b$ be positive integers such that
	    $b/a$ is not the cube of a rational number.
	    As $N\to\infty$, the number of integers $m\in [1,N]$ such that the equation
	    $aX^3-mY^3=b$ has an integer solution
	    is $O_{a,b}(N/(\log N)^{1/3})$.
	\end{proposition}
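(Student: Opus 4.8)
The plan is to extract a congruence obstruction on the prime factors of every admissible $m$ and then finish with a sieve, the point being that the hypothesis on $b/a$ forces a positive-density set of primes to be forbidden.

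First I would fix an integer $m\geq 1$ for which $aX^3-mY^3=b$ has a solution $(X,Y)\in\Z^2$ and examine the prime factors of $m$. Let $\ell$ be a prime with $\ell\mid m$ and $\ell\nmid ab$. Reducing the equation modulo $\ell$ gives $aX^3\equiv b\pmod{\ell}$, and since $\ell\nmid a$ this says that $b/a$ (a well-defined nonzero element of $\F_\ell$ because $\ell\nmid ab$) is a cube in $\F_\ell^{\times}$. Thus, setting
$$\mathcal{B}=\{\ell \text{ prime}: \ell\nmid ab \text{ and } b/a \text{ is not a cube in } \F_\ell^{\times}\},$$
every $m$ counted in the proposition has no prime factor lying in $\mathcal{B}$: each prime factor of $m$ either divides $ab$ (hence is excluded from $\mathcal{B}$ by definition) or does not divide $ab$ (hence lies outside $\mathcal{B}$ by the previous sentence).

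Next I would compute the density of $\mathcal{B}$ by Chebotarev. Because $b/a$ is not the cube of a rational number, $d:=a^2b$ is not a perfect cube, so $x^3-d$ has splitting field $M=\Q(\zeta_3,\sqrt[3]{d})$ with $\Gal(M/\Q)\cong S_3$. For $\ell\nmid ab$, the residue $b/a$ is a cube in $\F_\ell^{\times}$ if and only if $d$ is, and this is governed by the Frobenius class at $\ell$: every $\ell\equiv 2\pmod 3$ is harmless (cubing is a bijection of $\F_\ell^{\times}$), while among the primes $\ell\equiv 1\pmod 3$ exactly the proportion corresponding to the trivial Frobenius — namely one third — have $d$ a cube. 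Hence $\mathcal{B}$ is precisely the set of primes $\ell\equiv 1\pmod 3$ at which $d$ is a non-cube, a set of natural density $\tfrac13$, and Chebotarev together with partial summation yields
$$\sum_{\substack{\ell\in\mathcal{B}\\ \ell\leq x}}\frac{1}{\ell}=\frac{1}{3}\log\log x+O(1).$$

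Finally, the quantity in the proposition is at most the number of $m\leq N$ having no prime factor in $\mathcal{B}$. Since $\mathcal{B}$ has density $\tfrac13$, a standard upper-bound sieve for integers free of prime factors from a fixed set of positive density gives
$$\#\{m\leq N:\ p\mid m\Rightarrow p\notin\mathcal{B}\}\ll_{a,b}\frac{N}{(\log N)^{1/3}},$$
which is the asserted bound. I expect the last step to be the only delicate point: one must confirm that forbidding the prime factors in a set of density $\tfrac13$ produces exactly the saving $(\log N)^{-1/3}$, and that for this upper bound only a lower bound on $\sum_{\ell\in\mathcal{B},\,\ell\leq x}1/\ell$ is required — which is precisely what Chebotarev supplies. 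The irrationality hypothesis on $b/a$ enters decisively here, since it is what guarantees $\mathcal{B}$ is nonempty of density $\tfrac13$ rather than finite.
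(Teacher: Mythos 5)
Your proof follows essentially the same route as the paper's: the same congruence obstruction at primes dividing $m$, the same Chebotarev computation showing that the forbidden primes have density $1/3$, and the same sieve upper bound. The only point to watch is that plain Chebotarev plus partial summation yields $\sum_{\ell\in\mathcal{B},\,\ell\le x}1/\ell=\tfrac13\log\log x+o(\log\log x)$ rather than $+O(1)$ (the latter needs an effective Mertens-type theorem for Chebotarev); the weaker error term still gives $N(\log N)^{-1/3+o(1)}$, which is all the paper uses downstream.
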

	\begin{proof}
	Let $L=\Q(\sqrt[3]{b/a})$ and let $L'$ be its Galois closure. Let $S$ be the set of primes 
	$p\nmid 3ab$ such that $b/a$
	is not a cube mod $p$. This means $p$ remains a prime in $L$ and 
	$p\cO_L$ splits completely in $L'$; in other words the 
	Frobenius of $p$ with respect to $L'/\Q$ is the conjugacy class
	of the 2 elements of order 3.
	The Chebotarev density theorem gives that $S$ has Dirichlet as well as natural density
	$1/3$. 
	Put $s(x)=\vert S\cap [1,x]\vert$ so that 
	$s(x)=\displaystyle \frac{\pi(x)}{3}+o(\pi(x))$; put $r(x)=s(x)-\displaystyle\frac{\pi(x)}{3}$. Then partial summation gives:
	$$\sum_{p\in S, p\leq x}\frac{1}{p}=\int_{2^{-}}^{x}\frac{ds(t)}{t}=\frac{1}{3}\int_{2^{-}}^{x}\frac{d\pi(t)}{t}+\int_{2^{-}}^{x}\frac{dr(t)}{t}\sim \frac{1}{3}\log\log x\ \text{as $x\to\infty$}$$
	thanks to the Prime Number Theorem and the fact that $r(t)=o(\pi(t))$. 
	This implies
    \begin{equation}\label{eq:NF prod p in S}
    \prod_{p\in S,p\leq x}\left(1-\frac{1}{p}\right)=(\log x)^{-1/3}e^{o(\log\log x)}.
    \end{equation}
	
	Now observe that if 
	$m\in [1,N]$ is divisible by 
	some $p\in S$ then the equation
	$aX^3-mY^3=b$ cannot have an integer equation
	since $b/a$ is not a cube mod $p$.
	By sieving \cite[Chapter~3.2]{MV06_MN}, 
	the number of $m\in [1,N]$ such that
	$p\nmid m$ for all $p\in S$
	is $O\left(\displaystyle\prod_{p\in S,p\leq N}\left(1-\frac{1}{p}\right)N\right)$ and we use 
	\eqref{eq:NF prod p in S} to finish the proof.
	\end{proof}
	
	\begin{proof}[Proof of Theorem~\ref{thm:unconditional}]
	The upper bound in Theorem~\ref{thm:unconditional} follows from  
	Propositions \ref{prop:kX^3-mY^3} and \ref{prop:upper bound}. 
	For the lower bound, first we consider
	$S_k$ and the 
	equation $kX^3-mY^3=1$. We can always take
	$m=kX_0^3-1$ for $X_0\in [1,(N/k)^{1/3}]$
	so that the above equation
	has a solution $(X_0,1)$. We need that $k^2m\not\equiv\pm 1$ mod $9$ and $m$ is square-free for a positive proportion of such $X_0$. A direct calculation shows that regardless
	of the possibility of $k$ mod $9$, we can always
	find $r\in\{0,\ldots,8\}$
	such that $k^2(kr^3-1)\not\equiv \pm 1$ mod $9$.
	We now choose $X_0$ of the form
	$X_0=9t+r$ for $t\in [1,cN^{1/3}]$ where $c$ is
	a positive constant depending only on $k$.
	By classical results of Hooley \cite{Hoo67_OT,Hoo68_OT} (also see 
	\cite{Gra98_AB} for a more general result assuming ABC), the 
	irreducible
	cubic polynomial $f(t)=k(9t+r)^3-1\in\Z[t]$
	admits square-free values for at least 
	$c'cN^{1/3}$ many $t$ where $c'>0$ depends
	only on $k$ and $r$. The proof of
	$N^{1/3}\ll_k \vert \cT_k\cap [1,N]\vert$
	is completely similar.
	\end{proof}
	
	We will obtain the stronger upper bound $O(N^{1/3+\epsilon})$ 
	assuming the ABC Conjecture:
	\begin{conjecture}\label{conj:ABC}
	Let $\epsilon>0$, then there exists a positive constant $C$ depending only on $\epsilon$ such that the following holds. For all
	relatively prime integers $a,b,c\in \Z$ with $a+b=c$, we have:
	$$\max\{\vert a\vert,\vert b\vert,\vert c\vert\}\leq C\left(\prod_{\text{prime}\ p\mid abc}p\right)^{1+\epsilon}$$
	\end{conjecture}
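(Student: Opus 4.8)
The plan is complicated by the fact that the statement in question is the \emph{ABC Conjecture} itself, one of the central open problems in Diophantine number theory; no proof accepted by the mathematical community is currently available, and the authors rightly invoke it only as a hypothesis. Accordingly I can only sketch the principal lines of attack that have been pursued, together with the obstruction that has blocked each of them.

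First I would look to the polynomial (function field) analogue, where the inequality \emph{is} a theorem of Mason and Stothers: for coprime polynomials $a,b,c$ over a field, not all constant, with $a+b=c$, one has $\max\{\deg a,\deg b,\deg c\}\leq N_0(abc)-1$, where $N_0(abc)$ is the number of distinct roots of $abc$. The proof is short and rests on the Wronskian $W(a,b)=ab'-a'b$: substituting $b=c-a$ gives $W(a,b)=W(a,c)$, and comparing the order of vanishing of this common value at each repeated root of $abc$ against the elementary bound $\deg W(a,b)\leq \deg a+\deg b-1$ yields the inequality. The fundamental obstacle to transporting this argument to $\Z$ is that the integers carry no derivation playing the role of $d/dt$; the ``arithmetic derivative'' and Buium's theory of arithmetic differential equations ($p$-derivations) have been explored in this direction but do not deliver the global archimedean inequality. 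This is the hard part, and it is exactly where every elementary approach stalls.

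Failing an exact analogue, I would next assemble the known \emph{partial} results, which are genuine theorems but fall far short of Conjecture~\ref{conj:ABC}. Via Baker's theory of linear forms in logarithms, Stewart and Yu prove a bound of the shape $\max\{|a|,|b|,|c|\}\leq \exp\!\bigl(C_\epsilon\, R^{1/3+\epsilon}\bigr)$, where $R=\prod_{p\mid abc}p$ is the radical; being exponential in a power of $R$ rather than polynomial, such a bound cannot be substituted for ABC inside Theorem~\ref{thm:1/3+epsilon}. One could also record that ABC is a special case of Vojta's height conjecture, applied to $\bP^1$ with the three points $0,1,\infty$; this relocates the difficulty into Diophantine approximation on varieties, which is itself wide open, and via Belyi maps it connects ABC to effective forms of Mordell's conjecture.

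Finally I would mention Mochizuki's Inter-universal Teichm\"uller theory, which claims a proof of Conjecture~\ref{conj:ABC}; since the status of its key step remains disputed and the argument is not accepted as complete by the broader community, I would not rest anything in this paper upon it. The honest conclusion is that Conjecture~\ref{conj:ABC} must be left as an assumption: what the present paper can and does do is \emph{derive} strong consequences---the bound $O(N^{1/3+\epsilon})$ of Theorem~\ref{thm:1/3+epsilon}---conditionally upon it, which is the standard and appropriate use of ABC in this circle of problems.
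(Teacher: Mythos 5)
You are correct: the statement is the ABC Conjecture itself, which the paper states only as a hypothesis (Conjecture~\ref{conj:ABC}) and never proves, using it solely to derive the conditional bound in Theorem~\ref{thm:1/3+epsilon}. Your decision to leave it unproven and treat it as an assumption matches the paper's treatment exactly, and your survey of the Mason--Stothers analogue and known partial results is accurate context rather than a gap.
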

	
	Theorem~\ref{thm:1/3+epsilon} follows
	from Proposition~\ref{prop:kX^3-mY^3} and the following:
	\begin{proposition}
	Assume Conjecture~\ref{conj:ABC}. Let $a$ and $b$ be positive 
	integers such that $b/a$ is
	not the cube of a rational number and let $\epsilon>0$.
	The number of integers $m$ such that $\vert m\vert\leq N$ and
	the equation $aX^3-mY^3=b$ has an integer solution $(X,Y)$
	is $O_{a,b,\epsilon}(N^{(1/3)+\epsilon})$.
	\end{proposition}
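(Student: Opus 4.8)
The plan is to bound the number of valid $m$ by the number of integer pairs $(X,Y)$ that can occur in a solution, counted together with the divisibility constraint that makes such pairs sparse. Since $b/a$ is not a cube, no solution has $Y=0$, and $m=0$ is impossible; thus every valid $m$ is nonzero and arises from a pair $(X,Y)$ with $Y\neq 0$, $Y^3\mid aX^3-b$, and $1\le |m|=|aX^3-b|/|Y|^3\le N$. As the pair determines $m=(aX^3-b)/Y^3$, it suffices to bound the number of such pairs. I would first use ABC to show $|Y|\ll_{a,b,\epsilon} N^{1/3+\epsilon}$, and then count the admissible $X$ for each fixed $Y$ by reducing modulo $Y^3$.

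For the ABC step, I would apply Conjecture~\ref{conj:ABC} to the relation $aX^3+(-b)=mY^3$. Writing $d=\gcd(aX^3,b)$, which divides both $b$ and $mY^3=aX^3-b$, the three quantities $aX^3/d$, $-b/d$, $mY^3/d$ are pairwise coprime and satisfy the same additive relation, so ABC gives $a|X|^3/b\ll_{a,b,\epsilon} \rad(abmXY)^{1+\epsilon}$. The crucial point is to \emph{not} bound $\rad(m)$ by $N$, but instead to use $\rad(m)\le |m|=|aX^3-b|/|Y|^3\ll_{a,b}|X|^3/|Y|^3$; this yields $\rad(abmXY)\ll_{a,b}|X|^4/|Y|^2$ and hence, for any $\epsilon>0$, $|Y|\ll_{a,b,\epsilon}|X|^{1/2+\epsilon}$. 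Combined with the trivial consequence $|X|\ll_a N^{1/3}|Y|$ of $|m|\le N$, this forces $|X|\ll N^{2/3+\epsilon}$ and therefore $|Y|\ll N^{1/3+\epsilon}$.

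For the counting step, fix $Y$ with $1\le |Y|\ll N^{1/3+\epsilon}$. Any admissible $X$ satisfies $aX^3\equiv b\pmod{Y^3}$ and $|X|\ll N^{1/3}|Y|$, and for fixed $Y$ distinct $X$ give distinct $m$. The number of solutions of $aX^3\equiv b\pmod{Y^3}$ in an interval of length $\ll N^{1/3}|Y|$ is $\ll \rho(Y^3)\bigl(N^{1/3}|Y|^{-2}+1\bigr)$, where $\rho(Y^3)$, the number of residues $X\bmod Y^3$ with $aX^3\equiv b$, is $\ll_{a,b}3^{\omega(Y)}=Y^{o(1)}$ (the cubic $aX^3-b$ is separable, so the local root counts are bounded uniformly in the exponent). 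Summing over $Y$, the term $N^{1/3}\sum_Y \rho(Y^3)|Y|^{-2}$ converges to $O_{a,b}(N^{1/3})$, while $\sum_{|Y|\ll N^{1/3+\epsilon}}\rho(Y^3)\ll N^{1/3+\epsilon}(\log N)^2$; together these give $O_{a,b,\epsilon}(N^{1/3+\epsilon})$, as desired.

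The main obstacle is the ABC step, and specifically the observation that feeding the trivial bound $\rad(m)\le|m|$ through the equation—rather than the size bound $|m|\le N$—produces a radical that \emph{decreases} in $|Y|$. This is what converts the exponent from the useless $N^{1+\epsilon}$ (which one gets by bounding $\rad(m)$ by $N$) into the sharp $N^{1/3+\epsilon}$. The subsequent congruence count, which exploits the convergence of $\sum_Y|Y|^{-2}$, is routine but essential: bounding the pairs $(X,Y)$ merely by the product $|X|\,|Y|$ of their sizes would only yield $N^{1+\epsilon}$, so it is the reduction modulo $Y^3$ that recovers the correct main term.
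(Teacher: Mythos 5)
Your argument is correct and follows essentially the same route as the paper: apply ABC to $aX^3-b=mY^3$, feeding the bound $\rad(m)\le|m|\ll |X|^3/|Y|^3$ back through the equation to force $|X|\ll N^{2/3+\epsilon}$ and $|Y|\ll N^{1/3+\epsilon}$, then count $X$ in the $O_{a,b}(3^{\omega(Y)})$ residue classes modulo $Y^3$ and sum over $Y$. The only cosmetic differences are your explicit coprimality reduction before invoking ABC and your use of the average bound $\sum_{n\le M}3^{\omega(n)}\ll M(\log M)^2$ where the paper simply uses the pointwise bound $3^{\omega(Y)}\ll |Y|^{\delta'}$; both suffice.
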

	\begin{proof}
	Let $\delta$ be a small positive number depending on $\epsilon$ that will be specified later. 
	The implicit constants in this proof depends only on $a$, $b$, and $\delta$.
	Except for the finitely many $m$ for which $b/m$ is the cube of an integer, any $(m,X_0,Y_0)$ such that $aX_0^3-mY_0^3=b$, $\vert m\vert\leq N$, and $X_0,Y_0\in\Z$ satisfies $mX_0Y_0\neq 0$. An immediate consequence of ABC gives:
	$$\max\{\vert X_0^3\vert,\vert mY_0^3\vert\}\ll \vert mX_0Y_0\vert^{1+\delta}.$$ 
	From $aX_0^3-mY_0^3=b$, we get $\vert Y_0\vert\ll \vert m^{-1/3}X_0\vert$. Combining with the above, we get:
	$\vert X_0\vert^3\ll \vert m^{2/3}X_0^2\vert^{1+\delta}$.
	Put $\delta'=\displaystyle\frac{2(1+\delta)}{3(1-2\delta)}-\frac{2}{3}$
	so that we have:
	$$\vert X_0\vert \ll m^{(2/3)+\delta'}\ \text{and}\ \vert Y_0\vert\ll m^{(1/3)+\delta'}.$$
	Therefore, in order to estimate the number of $m$, we estimate
	the number of pairs $(X_0,Y_0)$ with $X_0=O(N^{(2/3)+\delta'})$
	and $Y_0=O(N^{(1/3)+\delta'})$ such that
	$\displaystyle\frac{aX_0^3-b}{Y_0^3}$ is an integer in $[-N,N]$.

	Fix such a $Y_0$, we have the obvious bound
	$\vert X_0\vert\ll N^{1/3}\vert Y_0\vert$ and we now study the congurence
	$aX_0^3\equiv b$ mod $Y_0^3$.
	Let $p$ be a prime divisor of $Y_0$ and let $d>0$ such that
	$p^d\parallel Y_0$. If $p\nmid ab$, the equation
	$aX^3\equiv b$ mod $p^{3d}$ has at most 3 solutions in
	$\Z/p^{3d}\Z$ thanks to the structure of $(\Z/p^{3d}\Z)^*$. 
	If $p\mid ab$ and $3d>\max\{v_p(a),v_p(b)\}$, for the above
	congruence equation to have a solution, we must have
	that $v_p(b)-v_p(a)$ is a positive integer divisible by $3$
	and any solution must have the form
	$p^{(v_p(b)-v_p(a))/3}x$ where $x$ satisfies
	$x^3\equiv u$ mod $p^{3d-(v_p(b)-v_p(a))}$
	and $u$ is given by $\displaystyle\frac{b}{a}=p^{v_p(b)-v_p(a)}u$.
	Again, there are at most 3 solutions in this case. 
	In conclusion, there are $O(3^{\omega(Y_0)})$ many solutions
	in $\Z/Y_0^3\Z$ of the equation $aX^3\equiv b$ mod $Y_0^3$; here $\omega(n)$ denotes the number of distinct prime factors of $n$.
	
	Overall, the number of pairs $(X_0,Y_0)$ is at most:
	$$\sum_{Y_0=O(N^{(1/3)+\delta'})}O\left(3^{\omega(Y_0)}\left(\frac{N^{1/3}\vert Y_0\vert}{\vert Y_0^3\vert}+1\right)\right).$$
	This is $O(N^{((1/3)+\delta')(1+\delta')})$ since 
	$3^{\omega(Y_0)}$ is dominated by $\vert Y_0\vert^{\delta'}$. Now choosing
	$\delta$ sufficiently small so that 
	$((1/3)+\delta')(1+\delta')<(1/3)+\epsilon$ and we get the desired conclusion.
	\end{proof}
	
	\section{The function field case}
	Throughout this section, let $F$ be a finite field
	of order $q$ and characteristic $p\neq 3$. A polynomial $f(t)\in F[t]$ 
	is called square-free (respectively cube-free) if it is not divisible
	by the square (respectively cube) of a 
	\emph{non-constant} polynomial in $F[t]$. Every cube-free $f(t)$ can be written uniquely as $f(t)=g(t)^2h(t)$ in which $g(t),h(t)\in F[t]$
	are square-free and $g(t)$ is monic. 	
	We have the function field analogue of Proposition~\ref{prop:kX^3-mY^3} whose proof is completely similar:
	\begin{proposition}\label{prop:gX^3-hY^3}
	Let $f,g,h\in F[t]$ be as above. Then $F(t,\sqrt[3]{f})$ is monogenic if
	and only if there exists
	$X,Y\in F[t]$ such that 
	$gX^3-hY^3\in F^*$.
	\end{proposition}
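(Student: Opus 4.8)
The plan is to transcribe the index-form computation from the proof of Proposition~\ref{prop:kX^3-mY^3}(a) almost verbatim, with $g$ playing the role of $k$ and $h$ the role of $m$, the only genuine change being that the unit group $\Z^\times=\{\pm 1\}$ is replaced by $F[t]^\times=F^*$. Concretely, set $\alpha=\sqrt[3]{f}$ and $K=F(t,\alpha)$, and invoke Theorem~\ref{thm:DedekindFt} to fix the $F[t]$-basis $\{1,\alpha,\alpha^2/g\}$ of $\cO_K$ (the analogue of $\{1,\alpha,\alpha^2/k\}$). Since $\cO_K=F[t][\theta]$ if and only if $\cO_K=F[t][\theta-w]$ for any $w\in F[t]$, and every element of $\cO_K$ has the shape $w+u\alpha+v(\alpha^2/g)$ with $w,u,v\in F[t]$, I may assume from the outset that $\theta=u\alpha+v(\alpha^2/g)$.

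Next I would compute $\theta^2$ using the single relation $\alpha^3=f=g^2h$, which gives $\alpha^3/g=gh$ and $\alpha^4/g^2=h\alpha$, hence
\[
\theta^2=u^2\alpha^2+2uv\,\frac{\alpha^3}{g}+v^2\,\frac{\alpha^4}{g^2}=2uvgh+v^2h\,\alpha+u^2\alpha^2.
\]
Expressing $(1,\theta,\theta^2)$ in the basis $\{1,\alpha,\alpha^2/g\}$, and noting $\alpha^2=g\cdot(\alpha^2/g)$, yields a transition matrix whose determinant, upon expansion along the row for $1$, equals $gu^3-hv^3$. Then $\{1,\theta,\theta^2\}$ is an $F[t]$-basis of $\cO_K$, equivalently $\cO_K=F[t][\theta]$, precisely when this determinant is a unit of $F[t]$.

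The final step is the identification $F[t]^\times=F^*$: a determinant in $F[t]$ is invertible exactly when it is a nonzero constant. Thus $K$ is monogenic if and only if $gu^3-hv^3\in F^*$ for some $u,v\in F[t]$, which is the asserted criterion with $X=u$ and $Y=v$. The one point that deserves care — and where I expect the only real work to lie — is justifying the equivalence between $\cO_K=F[t][\theta]$ and the invertibility of the transition matrix: this uses that $\cO_K$, being a finitely generated torsion-free module over the PID $F[t]$, is free of rank $[K:F(t)]=3$, so that any three elements spanning it must form a basis, and a basis-to-basis transition matrix is invertible if and only if its determinant lies in $F[t]^\times$. Unlike the number field case there is no congruence dichotomy to track, since $p\neq 3$ and Theorem~\ref{thm:DedekindFt} furnishes a single integral basis valid for every cube-free $f$; this is exactly why the function field statement collapses to the one equation $gX^3-hY^3\in F^*$ rather than the two cases of Proposition~\ref{prop:kX^3-mY^3}.
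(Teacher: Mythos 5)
Your proposal is correct and matches the paper's intent exactly: the paper gives no separate argument for Proposition~\ref{prop:gX^3-hY^3}, stating only that its proof is ``completely similar'' to that of Proposition~\ref{prop:kX^3-mY^3}, and your transcription (with $g,h$ in place of $k,m$, the single integral basis $\{1,\alpha,\alpha^2/g\}$ from Theorem~\ref{thm:DedekindFt}, and the unit group $F^*$ replacing $\{\pm 1\}$) is precisely that adaptation. The determinant computation yielding $gu^3-hv^3$, the reduction to $\theta=u\alpha+v(\alpha^2/g)$ by translation, and the justification via freeness of $\cO_K$ over the PID $F[t]$ are all carried out correctly.
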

	
	In function fields, the Mason-Stothers theorem plays a similar role to ABC:
	\begin{theorem}[Mason-Stothers]\label{thm:Mason-Stothers}
	Let $E$ be a field and let $A,B,C\in E[t]$ be
	relatively prime polynomials with $A+B=C$. Suppose that
	at least one of the derivatives $A',B',C'$
	is non-zero then
	$$\max\{\deg(A),\deg(B),\deg(C)\}\leq r(ABC)-1$$
	where $r(ABC)$ denotes the number of distinct roots of $ABC$ in $\bar{E}$. 
	\end{theorem}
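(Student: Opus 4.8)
The plan is to run the classical Wronskian proof of Stothers and Mason, paying attention to the one place where positive characteristic intervenes. Write $W = AB' - A'B$ for the Wronskian of the pair $(A,B)$. Differentiating $A + B = C$ gives $A' + B' = C'$, and a direct substitution shows that $W$ is symmetric across the triple up to sign: one checks $W = AC' - A'C = -(BC' - B'C)$. The first step is to prove $W \neq 0$, and this is the \emph{only} step that uses the hypothesis that some derivative is non-zero. Indeed, $A,B,C$ are pairwise coprime (if an irreducible divides two of them it divides the third, contradicting relative primality together with $A + B = C$), so $W = 0$ would give $AB' = A'B$ and hence $A \mid A'$; since $\deg A' \leq \deg A - 1$ whenever $A' \neq 0$, this forces $A' = 0$, and then successively $B' = 0$ and $C' = A' + B' = 0$, contradicting the hypothesis.

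The second step is a local analysis at each root, and here I would verify explicitly that the argument survives in characteristic $p$. If $\alpha \in \bar E$ is a root of $A$ of multiplicity $m$, write $A = (t-\alpha)^m \tilde{A}$ with $\tilde{A}(\alpha) \neq 0$; then $(t-\alpha)^{m-1}$ divides $A'$ regardless of whether $p \mid m$, so $(t-\alpha)^{m-1}$ divides both $AB'$ and $A'B$ and therefore divides $W$. Summing over the distinct roots of $A$ gives $A/\rad(A) \mid W$, where $\rad(A) = \prod_{\alpha}(t-\alpha)$ runs over the distinct roots of $A$ in $\bar E$. By the symmetry of $W = AB' - A'B$ in the pair $(A,B)$ the same reasoning gives $B/\rad(B) \mid W$, and running the identical argument on $W = AC' - A'C$ gives $C/\rad(C) \mid W$.

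Because $A,B,C$ are pairwise coprime, the three quotients $A/\rad(A)$, $B/\rad(B)$, $C/\rad(C)$ are pairwise coprime, so their product divides $W$ in $\bar E[t]$. Comparing degrees, and using $\rad(A)\rad(B)\rad(C) = \rad(ABC)$ together with $\deg \rad(ABC) = r(ABC)$, this yields $\deg(ABC) - r(ABC) \leq \deg W$. Finally, each symmetric expression for $W$ bounds its degree: $\deg W \leq \deg A + \deg B - 1$, and likewise with the other two pairs, so $\deg W \leq \deg(ABC) - \max\{\deg A, \deg B, \deg C\} - 1$. Combining the two inequalities and cancelling $\deg(ABC)$ gives $\max\{\deg A, \deg B, \deg C\} \leq r(ABC) - 1$, as desired.

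I expect the main obstacle to be conceptual rather than computational: in characteristic $p$ a root of multiplicity $m$ with $p \mid m$ may appear in $A'$ with multiplicity $m$ rather than $m-1$, so one must confirm that the divisibility $A/\rad(A) \mid W$ still holds (it does, since the argument needs only the lower bound $(t-\alpha)^{m-1} \mid A'$), and one must pinpoint that the hypothesis on the derivatives is precisely what rules out the degenerate case $W = 0$ in which the key degree inequality would be vacuous.
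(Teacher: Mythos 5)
Your argument is correct and complete: it is the standard Wronskian proof of Stothers and Mason, and the two points you single out for care in characteristic $p$ are exactly the right ones (the hypothesis on the derivatives is used only to force $W\neq 0$, and the local divisibility $(t-\alpha)^{m-1}\mid A'$ needs only a lower bound on the multiplicity, so it is insensitive to $p\mid m$). The paper itself states this as the known Mason--Stothers theorem and gives no proof, so there is nothing to compare against; the only cosmetic gaps in your write-up are the implicit assumptions that $A,B,C$ are all non-zero (forced anyway, since otherwise coprimality and $A+B=C$ make all three constant and all derivatives vanish) and that $A\neq 0$ when you pass from $A'=0$ and $AB'=A'B$ to $B'=0$.
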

	
	 In order to guarantee the condition on derivatives in the above theorem, we need:
	 \begin{lemma}\label{lem:one derivative is nonzero}
	 Let $g(t),h(t)\in F[t]$ be non-constant  square-free polynomials. Suppose there exist $X,Y\in F[t]$ such that $gX^3-hY^3\in F^*$. Then there exist $X_1,Y_1\in F[t]$
	 such that  $gX_1^3-hY_1^3\in F^*$ and 
	     at least one of the derivatives $(gX_1^3)'$ and
	     $(hY_1^3)'$ is non-zero.
	 \end{lemma}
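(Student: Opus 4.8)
The plan is to reduce the statement to producing a single \emph{good} solution (one with nonzero derivative), then to run an infinite descent on $\deg(gX^3)$, invoking the Mason--Stothers theorem (Theorem~\ref{thm:Mason-Stothers}) to kill the one case that would otherwise block the descent.

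First I would record two elementary reductions. Since $gX^3-hY^3\in F^*$ is a constant, differentiating gives $(gX^3)'=(hY^3)'$; the two derivatives therefore vanish simultaneously, so it suffices to find a solution with $(gX^3)'\neq 0$. Next I would characterize the bad solutions: because $F$ is perfect, $(gX^3)'=0$ if and only if $gX^3=U^p$ for some $U\in F[t]$, and then automatically $hY^3=W^p$ with $(U-W)^p=gX^3-hY^3\in F^*$, so that $c_0:=U-W\in F^*$ and $\gcd(U,W)=1$ (because $U-W$ is a unit).

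Now assume for contradiction that a solution exists but every solution is bad, and choose $(X,Y)$ with $\deg(gX^3)$ minimal; this forces $X,Y\neq 0$, and since a nonconstant squarefree polynomial has nonzero derivative, neither $X$ nor $Y$ is constant. Comparing $p$-adic valuations in $gX^3=U^p$, using that $g$ is squarefree and $\gcd(3,p)=1$, one finds $v_\pi(U)\equiv p^{-1}\pmod 3$ for each $\pi\mid g$ and $v_\pi(U)\equiv 0\pmod 3$ for $\pi\nmid g$. Writing $r\in\{1,2\}$ for the common residue $p^{-1}\bmod 3$, this gives $U=g^rZ^3$ and likewise $W=h^r\widetilde Z^3$, so $(Z,\widetilde Z)$ solves
\[
g^rZ^3-h^r\widetilde Z^3=c_0,\qquad \deg(g^rZ^3)=\tfrac1p\deg(gX^3)<\deg(gX^3).
\]
If $p\equiv 1\pmod 3$ then $r=1$, and $(Z,\widetilde Z)$ is a strictly smaller solution of the \emph{original} equation, contradicting minimality. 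The hard part is $p\equiv 2\pmod 3$, where $r=2$: the $p$-th root lands on the \emph{conjugate} equation $g^2Z^3-h^2\widetilde Z^3\in F^*$ instead of the original one, and there is no squarefree representative of $g^2$ modulo cubes with which to convert it back. I would resolve this by showing the conjugate equation admits \emph{no} good solution whatsoever: applying Theorem~\ref{thm:Mason-Stothers} to $g^2Z^3+(-h^2\widetilde Z^3)=c_0$ (coprimality coming from $\gcd(U,W)=1$, and the needed nonvanishing derivative being exactly goodness) yields $2\deg g+3\deg Z\le \deg g+\deg h+\deg Z+\deg\widetilde Z-1$ together with its symmetric companion; adding these forces $\deg Z+\deg\widetilde Z\le -2$, which is absurd. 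Hence $(Z,\widetilde Z)$ is itself bad, and a second $p$-th-root extraction — whose valuation bookkeeping now produces the residue $2p^{-1}\equiv 1$, i.e.\ exponent $1$ — returns a solution of the original equation of degree $\tfrac1{p^2}\deg(gX^3)$, once more contradicting minimality. In every case the minimal solution is good, which proves the lemma.

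The only genuine obstacle is thus the appearance of the conjugate equation when $p\equiv 2\pmod 3$, and the crux is the Mason--Stothers computation above: the extra factor $\deg g$ created by the exponent $2$ (invisible to the radical) makes a good conjugate solution impossible, forcing the descent to continue and close. The routine points still to be checked carefully are the valuation congruences (in particular that the exponents of primes dividing $g$ in $U$ are large enough to factor out $g^r$) and the coprimality required to invoke Theorem~\ref{thm:Mason-Stothers}.
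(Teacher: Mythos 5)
Your proof is correct and rests on the same three ingredients as the paper's own argument: the perfect-field characterization of vanishing derivative as being a $p$-th power, the bookkeeping of irreducible-factor exponents modulo $3$ after root extraction (forcing $U=g^rZ^3$ with $r\equiv p^{-1}\pmod 3$), and the Mason--Stothers computation that rules out the exponent-$2$ (conjugate) case. The only difference is organizational: the paper extracts the \emph{maximal} $p^n$-th root in a single step and then classifies the resulting pair, whereas you run a minimal-counterexample descent one $p$-th root at a time (requiring two extractions when $p\equiv 2\pmod 3$); the two come to the same thing, and the routine points you flag (the valuation congruences and the absorption of the leading constant into the cube) check out.
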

	\begin{proof}
	Write $g=g_1\cdots g_u$ and $h=h_1\cdots h_v$
	where the $g_i$'s and $h_j$'s are irreducible over $F$. Let $n$ be the largest non-negative integer such that both
	$gX^3$ and $hY^3$ are $p^n$-th power of 
	some element of $F[t]$. Write
	$gX^3=\tilde{X}^{p^n}$ and 
	$hY^3=\tilde{Y}^{p^n}$, we have
	that $\tilde{X}-\tilde{Y}\in F^*$ and 
	at least one of the derivatives $\tilde{X}'$
	and $\tilde{Y}'$ is non-zero.
	
	Since $p\neq 3$, from $gX^3=\tilde{X}^{p^n}$
	and $hY^3=\tilde{Y}^{p^n}$ we can express $\tilde{X}$ and 
	$\tilde{Y}$ as:
	$$\tilde{X}=g_1^{b_1}\cdots g_u^{b_u}X_0^{3}\ \text{and}\ \tilde{Y}=h_1^{c_1}\cdots h_v^{c_v}Y_0^3$$
	where the $b_i$'s and $c_j$'s are positive integer, $\gcd(X_0,g_1\cdots g_u)=\gcd(Y_0,h_1\cdots h_v)=1$,
	and $b_ip^n-1\equiv c_jp^n-1\equiv 0$ mod $3$
	for $1\leq i\leq u$ and $1\leq j\leq v$.
	
	Hence the $b_i$'s and $c_j$'s have the same non-zero congruence mod $3$. Depending on whether
	they are $1$ mod $3$ or respectively $2$ mod $3$, we can write 
	$$\tilde{X}=gX_1^3\ \text{and}\ \tilde{Y}=hY_1^3$$
	or respectively
	$$\tilde{X}=g^2X_1^3\ \text{and}\ \tilde{Y}=h^2Y_1^3.$$
	
	We need to rule out
	the second possibility above. Indeed, suppose it happens then
	the Mason-Stothers theorem implies:
	  \begin{align*}
	  \max\{2\deg(g)+3\deg(X_1),2\deg(h)+3\deg(Y_1)\}\leq &\deg(g)+\deg(X_1)+\deg(h)\\
	  & +\deg(Y_1)-1,
	  \end{align*}
	contradiction since the RHS is strictly smaller than the average of the 2 terms in the LHS. This finishes the proof.
	\end{proof}
	
	For $P(t)\in F[t]\setminus\{0\}$, let $\omega(P)$ denote the number of
	distinct monic irreducible factors of $P$. As before, we also need an upper bound for
	$\omega(P)$:
	\begin{lemma}
	For every $P(t)\in F[t]$ of degree $N\geq 2$, we have
	$\omega(P)\ll_q N/\log N$.
	\end{lemma}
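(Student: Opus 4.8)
The plan is to transcribe the classical bound $\omega(n)\ll \log n/\log\log n$ into the function-field setting, where the degree $N$ plays exactly the role of $\log n$. Write $k=\omega(P)$ and let $Q_1,\dots,Q_k$ be the distinct monic irreducible factors of $P$. Since the product $Q_1\cdots Q_k$ divides $P$ and degrees are additive, I would start from the one fundamental inequality
\[
\sum_{i=1}^k \deg(Q_i)\le \deg(P)=N .
\]
The entire content of the lemma is then that $k$ distinct monic irreducibles cannot all have small degree, which forces the left-hand side to be $\gg_q k\log k$.

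First I would bound how many irreducibles of bounded degree are available. The number of monic polynomials over $F$ of degree exactly $d$ is $q^d$, so the number of monic irreducibles of degree at most $D$ is at most $\sum_{d=1}^{D}q^d<\frac{q^{D+1}}{q-1}\le 2q^D$ for $q\ge 2$. (One could instead invoke the sharper count $\le q^d/d$ per degree, but this crude bound already suffices.) Hence, among $Q_1,\dots,Q_k$, fewer than $2q^{D}$ can have degree $\le D$. Next I would choose $D=\lfloor \log_q(k/4)\rfloor$, so that $2q^{D}\le 2\cdot(k/4)=k/2$; for $k$ large this means at least $k/2$ of the factors have degree $>D>\log_q(k/4)-1$. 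Substituting into the fundamental inequality gives
\[
N\ \ge\ \sum_{i=1}^k\deg(Q_i)\ \ge\ \frac{k}{2}\log_q(k/4)\ =\ \frac{k\log(k/4)}{2\log q},
\]
so that $k\log k\ll_q N$.

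To finish, I would perform the standard inversion of $k\log k\ll_q N$ into $k\ll_q N/\log N$: in the range $k\le N/\log N$ the desired bound is immediate, while if $k> N/\log N$ then $\log k\gg \log N$, and feeding this back into $k\log k\ll_q N$ yields $k\ll_q N/\log N$ as well. For the finitely many small values of $N$ one simply uses the trivial estimate $\omega(P)\le N$ (each irreducible factor has degree $\ge 1$) and enlarges the implied constant, giving a uniform bound valid for all $N\ge 2$.

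I do not expect any serious obstacle here: the core estimate—that at most $\approx q^{D}$ monic irreducibles have degree $\le D$—is elementary, and the argument is the exact function-field analogue of the primorial bound $p_1\cdots p_k=e^{(1+o(1))k\log k}$. The only points requiring mild care are the bookkeeping in the inversion step and the handling of small $k$ (equivalently small $N$), both of which are routine.
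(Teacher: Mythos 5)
Your proof is correct and rests on the same counting idea as the paper's: a polynomial of degree $N$ cannot have many distinct irreducible factors because there are only $O_q(q^D)$ monic irreducibles of degree at most $D$. The paper runs the count in the opposite direction (choosing the smallest $k$ with $N\leq d_k$, where $d_k$ is the degree of the product of all monic irreducibles of degree at most $k$, and invoking the prime polynomial theorem to bound $\omega(P)$ by $O(q^k/k)$), whereas you lower-bound $N$ by $\tfrac{k}{2}\log_q(k/4)$ using only the trivial count $q^d$ and then invert $k\log k\ll_q N$; both routes are routine and give the same bound.
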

	\begin{proof}
	   All the implicit constants in this proof depend only on $q$.
	   For every positive interger $k$, let $d_k$ be the degree of the product of all monic irreducible polynomials
	   of degree at most $k$. Since there are 
	   $(q^n+O(q^{n/2}))/n$ monic irreducible polynomials of degree $n$, we have
	   $$d_k=\sum_{n=1}^k (q^{n}+O(q^{n/2}))= \frac{q^{k+1}}{q-1}+O(q^{k/2}).$$
	
	   Now we choose the smallest $k$ such that $N\leq d_k$. This implies that $\omega(P)$ is at most the number of
	   monic irreducible polynomials of degree at most $k$: 
	   $$\omega(P)\leq \sum_{n=1}^k\frac{q^n+O(q^{n/2})}{n}=O(q^k/k).$$
	   From the above formula for $d_k$ and the choice of $k$, we have that $q^k\ll N$ and $k\ll \log N$;
	   this finishes the proof.
	\end{proof}
	
	It turns out that we will need an estimate for
	$\sum 3^{\omega(P)}$ where $\deg(P)\leq N$. Using the above  bound
	$N/\log N$ for each individual $\omega(P)$
	would yield $O(q^{N+O(N/\log N)})$
	for the above sum which would not be good enough for our purpose.
	Instead, we have:
	\begin{lemma}
	$\displaystyle\sum_{\deg (P)\leq N} 3^{\omega(P)}=O(N^2q^N)$.
	\end{lemma}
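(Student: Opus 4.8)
The plan is to pass from a sum over polynomials to a Dirichlet-series / generating-function computation, exploiting the fact that $3^{\omega(P)}$ is multiplicative. First I would sort the polynomials by degree and write
\[
\sum_{\deg(P)\leq N} 3^{\omega(P)} = \sum_{n=0}^{N} a_n, \qquad a_n := \sum_{\deg(P)=n} 3^{\omega(P)},
\]
so it suffices to show $a_n = O(n q^n)$, since summing over $n \leq N$ then gives $O(N^2 q^N)$. To control $a_n$ I would restrict to \emph{monic} $P$ (the general case multiplies the count by $q-1$, absorbed into the constant) and study the generating function $D(s) = \sum_{P \text{ monic}} 3^{\omega(P)} |P|^{-s}$, where $|P| = q^{\deg P}$. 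Because $3^{\omega(P)}$ is multiplicative and equals $3$ on every prime power, the Euler product is
\[
D(s) = \prod_{P \text{ irred.}} \left(1 + 3|P|^{-s} + 3|P|^{-2s} + \cdots\right) = \prod_{P \text{ irred.}} \frac{1 + 2|P|^{-s}}{1 - |P|^{-s}}.
\]

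\textbf{Factoring out the zeta function.}

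The denominator is exactly the zeta function of $\F_q[t]$, namely $Z(s) = \prod_P (1-|P|^{-s})^{-1} = (1-q^{1-s})^{-1}$, which has the single pole at $q^{-s} = q^{-1}$. Writing $D(s) = Z(s)^c \cdot E(s)$ for the right value of $c$ and a convergent correction factor $E(s)$ would let me read off the growth of $a_n$ from the order of the pole. Setting $x = q^{-s}$, I would show $D = \sum_n a_n x^n$ satisfies
\[
\sum_n a_n x^n = \prod_{P} \frac{1 + 2|P|^{-s}}{1-|P|^{-s}} = \frac{1}{(1-qx)^{c}}\,G(x)
\]
where $G(x)$ is holomorphic and nonzero in a disc of radius larger than $1/q$. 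Here the key input is the prime polynomial theorem: there are $q^d/d + O(q^{d/2})$ monic irreducibles of degree $d$, which forces $c = 3$ (the local factor contributes $1 + 3|P|^{-s} + O(|P|^{-2s})$, and matching the $\log$ of the product against $3\log Z(s)$ pins down $c=3$, with the error giving a factor analytic on $|x| < q^{-1/2}$).

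\textbf{Extracting the coefficient asymptotics.}

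With $D(x) = (1-qx)^{-3} G(x)$ and $G$ analytic past $|x|=1/q$, the coefficient of $x^n$ in $(1-qx)^{-3}$ is $\binom{n+2}{2} q^n = O(n^2 q^n)$, but convolving with the analytic $G$ (whose coefficients decay like $q^{-n/2+o(n)}$) I would obtain $a_n = O(n^2 q^n)$ — which already suffices, since $\sum_{n\le N} a_n = O(N^2 q^N)$ follows because the top term $n=N$ dominates. In fact one can be more careful and get $a_n = O(nq^n)$, but it is not needed: even the crude per-degree bound $a_n = O(n^2 q^n)$ summed geometrically gives the claim. The main obstacle is making the pole-order argument rigorous in the function-field setting: I must verify that $G(x)$ genuinely extends analytically beyond the circle $|x| = 1/q$ and is bounded there, which comes down to showing $\prod_P (1+2|P|^{-s})(1-|P|^{-s})^{2}$ converges for $\Re(s) > 1/2$. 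This convergence is exactly where the error term $O(q^{d/2})$ in the prime polynomial theorem enters, so the delicate point is bookkeeping the irreducible-count error to confirm the correction factor is controlled on a disc of radius $q^{-1/2+\eta}$. Once analyticity of $G$ past $1/q$ is established, singularity analysis (or a direct contour/partial-fractions argument, which is elementary here since $Z(s)$ is rational) closes the estimate.
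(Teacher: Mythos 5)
Your argument is correct and begins identically to the paper's: both reduce to monic polynomials of fixed degree and compute the Euler product $\prod_P \frac{1+2T^{\deg P}}{1-T^{\deg P}}$ for the generating series of $a_n=\sum_{\deg P=n}3^{\omega(P)}$. The two proofs diverge only at the last step. You factor the series as $(1-qT)^{-3}G(T)$ and continue $G$ analytically past $|T|=1/q$ (the local factor of $G$ is $1-3T^{2\deg P}+2T^{3\deg P}$, so in fact only the trivial bound $\pi_q(d)\le q^d/d$ is needed, not the error term of the prime polynomial theorem), then extract $a_n=O(n^2q^n)$ by singularity analysis. The paper avoids analysis entirely: since $1+2x$ is dominated coefficientwise by $(1+x+x^2+\cdots)^2$, the numerator $\prod_P(1+2T^{\deg P})$ has coefficients bounded by those of $1/(1-qT)^2$, hence $a_n$ is at most the $n$-th coefficient of $1/(1-qT)^3$, namely $\binom{n+2}{2}q^n$. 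Your route yields a genuine asymptotic $a_n\sim c\,n^2q^n$ rather than just an upper bound, at the cost of the analytic-continuation bookkeeping; the paper's positivity/domination trick is shorter and needs no information about the distribution of irreducibles. Two asides in your write-up are off, though neither affects the conclusion: the coefficients of $G$ grow like $q^{n/2+o(n)}$ rather than decaying like $q^{-n/2}$ (what matters for the convolution is that $\sum_k|g_k|q^{-k}$ converges), and the claim that one could get $a_n=O(nq^n)$ is false, since the pole has order $3$ and $G(1/q)\neq 0$, so in fact $a_n\asymp n^2q^n$.
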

	\begin{proof}
	   Let $s_N$ be $\sum 3^{\omega(P)}$ where $P$ ranges over all $\emph{monic}$ polynomials of degree equal to $N$. 
	   It suffices to show $s_N=O(N^2q^N)$. The generating series $\displaystyle\sum_{n}s_nT^n$ has the Euler product:
	   $$\prod_{P}(1+3T^{\deg(P)}+3T^{2\deg(P)}+\ldots)=\prod_{P}\frac{1+2T^{\deg(P)}}{1-T^{\deg(P)}},$$
	   where $P$ ranges over all the monic irreducible polynomials over $F$. The denominator is simply the zeta-function
	   $1/(1-qT)$ while the coefficients of the numerator are bounded above
	   by the coefficients of 
	   $\displaystyle\prod_{P}(1+t^{\deg(P)}+t^{2\deg(P)}+\ldots)^2=\frac{1}{(1-qT)^2}$. Therefore the $s_N$'s are bounded
	   above by the coefficients of $1/(1-qT)^3$ and this finishes the proof.
	\end{proof}
	
	\begin{proof}[Proof of Theorem~\ref{thm:Ft setting}]
	    For the lower bound, we simply study the equation
	    $gX^3-hY^3=1$. Either by adapting the arguments in 
	    \cite{Hoo67_OT,Hoo68_OT,Gra98_AB} or using 
	    a general result of Poonen \cite[Theorem~3.4]{Poo03_SV}
	    which is valid for a multivariable polynomial, 
	     we have that
	    for a positive proportion of the polynomials
	    $X\in F[t]$ with $\deg(X)\leq (N-\deg(g))/3$, we have
	    that $gX^3-1$ is squarefree; we now simply take 
	    $Y=1$ and $h=gX^3-1$ for those $X$'s. 
	    This proves the lower bound.
	    
	    For the upper bound, we prove that for an arbitrary
	    $\alpha\in F^*$, there are $O(q^{N/3})$ many $h$ of degree at most $N$ such that the equation $gX^3-hY^3=\alpha$ has a solution $X,Y\in F[t]$; since
	    $\deg(g)>0$ we must have that $Y\neq 0$.
	    By Lemma~\ref{lem:one derivative is nonzero}, we may assume that at least one of the derivatives $(gX^3)'$ and $(hY^3)'$ is non-zero.
	    The Mason-Stothers theorem yields:
	    $$\deg(h)+3\deg(Y)\leq \deg(g)+\deg(X)+\deg(h)+\deg(Y)-1,\ \text{and}$$
	    $$\deg(g)+3\deg(X)\leq \deg(g)+\deg(X)+\deg(h)+\deg(Y)-1.$$
	    The first inequality gives $\deg(Y)\leq \deg(X)/2+O(1)$
	    then we use this and the second inequality to obtain
	    $\deg(X)\leq \frac{2}{3}\deg(h)+O(1)$. Then it follows that $\deg(Y)\leq \frac{N}{3}+O(1)$.
	
	   We now count the number of pairs $(X,Y)$ with $Y\neq 0$ such that 
	   $\deg(Y)\leq \frac{N}{3}+O(1)$ and 
	   $\frac{gX^3-\alpha}{Y^3}$ is a polynomial in $F[t]$ of degree at most $N$.
	   Hence $\deg(X)\leq \deg(Y)+(N/3)-\deg(g)$. Arguing as before, for each prime power factor $P^n$ of $Y$, the congruence equation
	   $gX^3-\alpha=0$ mod $P^{3n}$ has 
	   at most 3 solutions mod $P^{3n}$. 
	   Therefore by the Chinese Remainder Theorem, the congruence equation 
	   $gX^3-\alpha=0$ mod $Y^3$ has at most $3^{\omega(Y)}$ solutions mod $Y^3$.
	   Therefore once $Y$ is fixed, there are at most
	   $$3^{\omega(Y)}\left(q^{\deg(Y)+(N/3)-\deg(g)-3\deg(Y)+1}+1\right)$$
	   possibilities for $X$. Hence the number of pairs $(X,Y)$
	   is at most:
	   \begin{align*}
	   & \sum_{k=0}^{(N/3)+O(1)} \sum_{Y:\ \deg(Y)=k}3^{\omega(Y)}\left(q^{(N/3)-2\deg(Y)}
	   +1\right)\\
	   & \ll \sum_{k=2}^{(N/3)+O(1)}(q-1)q^k3^{O(k/\log k)}q^{(N/3)-2k}+
	   \sum_{k=0}^{(N/3)+O(1)} \sum_{Y:\ \deg(Y)=k}3^{\omega(Y)}.\\
	   \end{align*}
	   The first term is $O(q^{N/3})$ since $\sum_{k=0}^{\infty} 3^{O(k/\log k)}q^{-k}<\infty$ while the second term is $O(N^2q^{N/3})$ thanks to the previous lemma and this finishes the proof.
	\end{proof}
	
	\section{Further Questions}
	Thanks to the lower bounds in our results, we know that the ``main terms'' $N^{1/3}$ and $q^{N/3}$ in the upper bounds are optimal. However, it seems possible that the ``extra factors'' $N^{\epsilon}$
	in the number field case
	and $N^2$ in the function field case can be improved. This motivates:
	\begin{question}
		\begin{itemize}
			\item [(a)] In Theorem~\ref{thm:1/3+epsilon}, can one replace the bound $O(N^{1/3+\epsilon})$ by $O(N^{1/3}f(N))$ where 
			$f(N)$ is dominated by any $N^{\epsilon}$?
			
			\smallskip
			
			\item [(b)] In Theorem~\ref{thm:Ft setting}, can one improve the bound $O(N^2q^{N/3})$? Could this upper bound even be $O(q^{N/3})$?
			
			\smallskip
			
			\item [(c)] In the number field case, can one obtain an unconditional power-saving bound $O(N^c)$ where $c<1$? 		
		\end{itemize}
	\end{question}

	\bibliographystyle{amsalpha}
	\bibliography{MPC} 	

\end{document}